\documentclass[twocolumn]{article}
\usepackage{amssymb,MnSymbol,times}
\usepackage{amsthm,amsmath}
\usepackage{cite}

\setlength{\topmargin}{-1.54cm}
\setlength{\oddsidemargin}{-1.09cm}
\setlength{\textwidth}{18.1cm}
\setlength{\textheight}{24.4cm}

\title{\huge Joint signature of two or more systems with applications to multistate systems made up of two-state components}

\author{Jean-Luc Marichal\footnote{Corresponding author: Jean-Luc Marichal is with the Mathematics Research Unit, University of Luxembourg, Maison du Nombre, 6, avenue de la Fonte, L-4364 Esch-sur-Alzette, Luxembourg. Email: jean-luc.marichal[at]uni.lu}\and Pierre Mathonet\footnote{University of Li\`ege, Department of Mathematics, All\'ee de la D\'ecouverte, 12 - B37, B-4000 Li\`ege, Belgium. Email: p.mathonet[at]ulg.ac.be} \and Jorge Navarro\footnote{Facultad de Matem\'aticas, Universidad de Murcia, 30100 Murcia, Spain. Email: jorgenav[at]um.es} \and Christian Paroissin\footnote{CNRS / Univ Pau \& Pays Adour, Laboratoire de Math\'ematiques et de leurs Applications de Pau -- F\'ed\'eration IPRA, UMR 5142, 64000 Pau, France. Email: christian.paroissin[at]univ-pau.fr}}

\date{Revised, April 15, 2017} 

\begin{document}

\theoremstyle{plain}
\newtheorem{theorem}{Theorem}[section]
\newtheorem{lemma}[theorem]{Lemma}
\newtheorem{proposition}[theorem]{Proposition}
\newtheorem{corollary}[theorem]{Corollary}
\newtheorem{fact}[theorem]{Fact}
\newtheorem*{main}{Main Theorem}

\theoremstyle{definition}
\newtheorem{definition}[theorem]{Definition}
\newtheorem{example}[theorem]{Example}
\newtheorem{remark}[theorem]{Remark}
\newtheorem*{DP}{Decomposition Principle}

\renewcommand{\S}{\mathcal{S}}

\newcommand{\N}{\mathbb{N}}                     
\newcommand{\R}{\mathbb{R}}                     
\newcommand{\bfa}{\mathbf{a}}
\newcommand{\bfx}{\mathbf{x}}
\newcommand{\bfu}{\mathbf{u}}
\newcommand{\bfv}{\mathbf{v}}
\newcommand{\bfy}{\mathbf{y}}
\newcommand{\bfz}{\mathbf{z}}
\newcommand{\bfb}{\mathbf{b}}
\newcommand{\tk}{{\mathcal T}_k}
\newcommand{\uno}{\boldsymbol 1}

\newcommand{\calS}{\mathcal{S}}
\newcommand{\Ind}{\mathrm{Ind}}

\def\ang#1{\langle{#1}\rangle}

\maketitle

\section*{Abstract}

The structure signature of a system made up of $n$ components having continuous and i.i.d.\ lifetimes was defined in the eighties by Samaniego as the $n$-tuple whose $k$-th coordinate is the probability that the $k$-th
component failure causes the system to fail. More recently, a bivariate version of this concept was considered as follows. The joint structure signature of a pair of systems built on a common set of components having continuous and i.i.d.\ lifetimes is a square matrix of order $n$ whose $(k,l)$-entry is the probability that the $k$-th failure causes the first system to fail and the $l$-th failure causes the second system to fail. This concept was successfully used to derive a signature-based decomposition of the joint reliability of the two systems. In the first part of this paper we provide an explicit formula to compute the joint structure signature of two or more systems and extend this formula to the general non-i.i.d.\ case, assuming only that the distribution of the component lifetimes has no ties. We also provide and discuss a necessary and sufficient condition on this distribution for the joint reliability of the systems to have a signature-based decomposition. In the second part of this paper we show how our results can be efficiently applied to the investigation of the reliability and signature of multistate systems made up of two-state components. The key observation is that the structure function of such a multistate system can always be additively decomposed into a sum of classical structure functions. Considering a multistate system then reduces to considering simultaneously several two-state systems.

\medskip

\noindent\textbf{Keywords:} Reliability, semicoherent system, dependent lifetimes, system signature, system joint signature, multistate system.

\medskip

\noindent\emph{2010 Mathematics Subject Classification:} 62N05, 90B25, 94C10.

\section{Introduction}

Consider a system $\S=(C,\phi,F)$, where $C$ is a set $[n]=\{1,\ldots,n\}$ of nonrepairable components, $\phi\colon\{0,1\}^n\to\{0,1\}$ is a structure function, and $F$ is the joint c.d.f.\ of the component lifetimes $T_1,\ldots,T_n$, defined by
$$
F(t_1,\ldots,t_n) ~=~ \Pr(T_1\leqslant t_1,\ldots,T_n\leqslant t_n)\, ,\quad t_1,\ldots,t_n\geqslant 0.
$$
We assume that the system $\mathcal{S}$ (or equivalently, its structure function $\phi$) is \emph{semicoherent}, which means that the function $\phi$ is nondecreasing in each variable and satisfies the conditions $\phi(\mathbf{0})=0$ and $\phi(\mathbf{1})=1$, where $\mathbf{0}=(0,\ldots,0)$ and $\mathbf{1}=(1,\ldots,1)$. As usual we say that the system $\mathcal{S}$ is \emph{coherent} if it is semicoherent and, additionally, every of its components is relevant (i.e., the function $\phi$ is nonconstant in each of its variables). We also assume throughout that the joint c.d.f.\ $F$ of the component lifetimes has no ties, which means that $\Pr(T_i=T_j)=0$ for all distinct $i,j\in C$.

Samaniego \cite{Sam85} defined the \emph{signature} of any system $\S$ whose components have continuous and i.i.d.\ lifetimes as the $n$-tuple $\mathbf{s}=(s_1,\ldots,s_n)$ whose $k$-th coordinate is the probability that
the $k$-th component failure causes the system to fail. In other words, we have
$$
s_k ~=~ \Pr(T_\S=T_{k:n}),\qquad k=1,\ldots,n,
$$
where $T_\S$ is the system lifetime and $T_{k:n}$ is the $k$-th smallest component lifetime, that is, the $k$-th order statistic of the component lifetimes.

Boland \cite{Bol01} showed that $s_k$ can be computed only from the structure function $\phi$ by means of the formula
\begin{equation}\label{eq:asad678}
s_k ~=~ \sum_{\textstyle{A\subseteq C\atop |A|=n-k+1}}\frac{1}{{n\choose |A|}}\,\phi(A)-\sum_{\textstyle{A\subseteq C\atop
|A|=n-k}}\frac{1}{{n\choose |A|}}\,\phi(A).
\end{equation}
Here and throughout we identify Boolean tuples $\bfx\in\{0,1\}^n$ and subsets $A\subseteq [n]$ by setting $x_i=1$ if and only if $i\in A$. In particular, $|A|$ denotes the cardinality of $A$ and $|\bfx|$ denotes the number $\sum_ix_i$. We also use the same symbol to denote both a function $f\colon\{0,1\}^n\to\R$ and the corresponding set function $f\colon 2^{[n]}\to\R$ interchangeably. For instance, we write $\phi(\mathbf{0})=\phi(\varnothing)$ and $\phi(\mathbf{1})=\phi([n])$.

Equation~\eqref{eq:asad678} clearly shows that the signature $\mathbf{s}$ is independent of the joint distribution $F$ of the component lifetimes. It depends only on $n$ and $\phi$ and has an important combinatorial meaning (see \cite{Bol01}). For this reason it is often called the structure signature of the system. In this paper we will therefore regard the \emph{structure signature} of the system as the $n$-tuple $\mathbf{s}=(s_1,\ldots,s_n)$ whose $k$-th coordinate is given by Eq.~\eqref{eq:asad678}. Thus defined, this concept always exists as a purely combinatorial object even in the general non-i.i.d.\ case.

The original definition of signature can be easily extended to the general non-i.i.d.\ case where it is assumed only that the joint distribution function $F$ has no ties. The \emph{probability signature} of a system $\S$ is the $n$-tuple $\mathbf{p}=(p_1,\ldots,p_n)$ whose $k$-th coordinate is defined by
$$
p_k ~=~ \Pr(T_\S=T_{k:n}),\qquad k=1,\ldots,n.
$$
By definition the identity $\mathbf{p}=\mathbf{s}$ holds (which means that the probability $\Pr(T_\S=T_{k:n})$ is exactly given by Boland's formula \eqref{eq:asad678} for $k=1,\ldots,n$) whenever the component lifetimes are i.i.d.\ and continuous. Actually, it was shown in \cite{NavRyc07} that this identity still holds whenever the component lifetimes are exchangeable (i.e., the function $F$ is symmetric) and absolutely continuous. However, it was also observed \cite{MarMat11,MarMatWal11,NavSpiBal10,Spi08} that in general the probability signature $\mathbf{p}$ depends on the joint c.d.f.\ $F$ of the component lifetimes and that this dependence is captured by means of the \emph{relative quality function} $q\colon 2^{[n]}\to [0,1]$, which is defined as
$$
q(A) ~=~ \Pr\Big(\max_{i\in C\setminus A}T_i < \min_{j\in A}T_j\Big), \qquad A\subseteq C,
$$
with the convention that $q(\varnothing)=q(C)=1$. The $k$-th coordinate of the probability signature is then given by the formula (see \cite{MarMat11})
\begin{equation}\label{eq:sdf67}
p_k ~=~ \sum_{\textstyle{A\subseteq C\atop |A|=n-k+1}}q(A)\,\phi(A)-\sum_{\textstyle{A\subseteq C\atop |A|=n-k}}q(A)\,\phi(A).
\end{equation}
which generalizes Boland's formula \eqref{eq:asad678} under the sole assumption that the joint distribution function $F$ has no ties.

\begin{remark}\label{rem:sp4}
Comparing formulas \eqref{eq:asad678} and \eqref{eq:sdf67} shows that we still have $\mathbf{p}=\mathbf{s}$ whenever $q(A)$ reduces to $1/{n\choose |A|}$ for every $A\subseteq C$. In the general dependent case when the latter condition does not hold, both $n$-tuples $\mathbf{p}$ and $\mathbf{s}$ still exist but no longer coincide.
\end{remark}

Navarro et al.~\cite{NavSamBal10,NavSamBal13} proposed to analyze the joint behavior of several systems built on a common set of components. They motivated their analysis by real-life applications based on computer networks. From a mathematical viewpoint, such situations consist in considering simultaneously $m$ semicoherent systems
$$
\S_1~=~(C,\phi_1,F){\,},~\ldots{\,},~\S_m~=~(C,\phi_m,F)
$$
having a common set $C$ of components and a common joint distribution $F$ of lifetimes.

To simplify our presentation we will henceforth restrict ourselves to the case of two systems. However, our definitions and results can be easily and naturally extended to an arbitrary number of systems.

Let us consider a basic example. The symbols $\wedge$ and $\vee$ represent the $\min$ and $\max$ functions, respectively.

\begin{example}\label{ex:1}
Consider the systems $\S_1=(C,\phi_1,F)$ and $\S_2=(C,\phi_2,F)$, where $C$ is a set of four components and the structure functions $\phi_1$ and $\phi_2$ are given by
$$
\phi_1(x_1,x_2,x_3,x_4) ~=~ x_1\wedge x_2{\,},
$$
and
$$
\phi_2(x_1,x_2,x_3,x_4) ~=~ (x_2\vee x_3)\wedge x_4{\,},
$$
as depicted in Figures \ref{figure1} and \ref{figure2}. Since $\S_1$ is a series system made up of components 1 and 2, the function $\phi_1$ is constant with respect to its third and fourth variables. Similarly, we see that $\phi_2$ is constant with respect to its first variable.
\end{example}

\setlength{\unitlength}{4.5ex}
\begin{figure}[htbp]
\begin{minipage}[t]{0.45\textwidth}
\begin{center}
\begin{picture}(7,3)
\put(2,1){\framebox(1,1){1}}\put(4,1){\framebox(1,1){2}}%
\put(1,1.5){\circle*{0.15}}\put(6,1.5){\circle*{0.15}}%
\multiput(1,1.5)(2,0){3}{\line(1,0){1}}
\end{picture}
\caption{System $\S_1$}\label{figure1}
\end{center}
\end{minipage}

\vspace{5ex}

\begin{minipage}[t]{0.45\textwidth}
\begin{center}
\begin{picture}(7,3)
\put(2,0.25){\framebox(1,1){3}}\put(2,1.75){\framebox(1,1){2}}\put(5,1){\framebox(1,1){4}}%
\put(0,1.5){\circle*{0.15}}\put(7,1.5){\circle*{0.15}}%
\multiput(1,0.75)(3,0){2}{\line(0,1){1.5}}%
\put(0,1.5){\line(1,0){1}}%
\multiput(1,0.75)(2,0){2}{\line(1,0){1}}\multiput(1,2.25)(2,0){2}{\line(1,0){1}}\multiput(4,1.5)(2,0){2}{\line(1,0){1}}%
\end{picture}
\caption{System $\S_2$}\label{figure2}
\end{center}
\end{minipage}
\end{figure}

\begin{remark}
Example~\ref{ex:1} enables us to emphasize the need to consider semicoherent systems instead of coherent systems. Indeed, by doing so we can consider several systems that share only some (and not all) components.
\end{remark}

Under the assumption that the lifetimes are i.i.d.\ and continuous, Navarro et al.~\cite{NavSamBal13} defined the \emph{joint structure signature} of the systems $\S_1$ and $\S_2$ as the square matrix $\mathbf{s}$ of order $n$ whose $(k,l)$-entry is the probability
$$
s_{k,l} ~=~ \Pr(T_{\S_1}=T_{k:n}~\mbox{and}~T_{\S_2}=T_{l:n}),\quad k,l=1,\ldots,n.
$$
Thus, $s_{k,l}$ is exactly the probability that the $k$-th failure causes the system $\S_1$ to fail and
that the $l$-th failure causes the system $\S_2$ to fail in the i.i.d.\ and continuous case.

We will see in Corollary~\ref{cor:main3} that the joint structure signature is independent of the joint distribution c.d.f.\ $F$ of the component lifetimes. It depends only on $n$ and the structure functions $\phi_1$ and $\phi_2$. It is then a purely combinatorial object and, similarly to the structure signature, we will consider that this concept still exists in the general non-i.i.d.\ case.

Just as for the concept of structure signature, the concept of joint structure signature can also be extended to the general dependent setting, assuming only that the function $F$ has no ties. Thus, we define the \emph{joint probability signature} of two systems $\S_1=(C,\phi_1,F)$ and $\S_2=(C,\phi_2,F)$ as the square matrix $\mathbf{p}$ of order $n$ whose $(k,l)$-entry is the probability
$$
p_{k,l} ~=~ \Pr(T_{\S_1}=T_{k:n}~\mbox{and}~T_{\S_2}=T_{l:n}),\quad k,l=1,\ldots,n.
$$
In general this matrix depends on both the structures of the systems and the joint c.d.f.\ $F$ of the component lifetimes.

The outline of this paper is as follows. In the first part (consisting of Sections 2 and 3) we provide a generalization of Boland's formula \eqref{eq:asad678} to joint structure signatures (Corollaries~\ref{cor:ConvForm5873s} and \ref{cor:main3}). We also introduce a bivariate version of the relative quality function (Definition~\ref{de:jrqf4}) and use it to provide an extension of Eq.~\eqref{eq:sdf67} to joint probability signatures (Theorem~\ref{thm:main3}).

In the continuous and i.i.d.\ case, the structure signature was used by Samaniego \cite{Sam85,Sam07} to derive a signature-based decomposition of the reliability
$$
\overline{F}_{\S}(t) ~=~ \Pr(T_{\S}>t),\qquad t\geqslant 0,
$$
of the system $\S$. The concept of joint structure signature was used similarly by Navarro et al.~\cite{NavSamBal13} to derive a signature-based decomposition of the joint reliability
$$
\overline{F}_{\S_1,\S_2}(t_1,t_2) ~=~ \Pr(T_{\S_1}>t_1~\mbox{and}~T_{\S_2}>t_2),\quad t_1,t_2\geqslant 0,
$$
of the systems $\S_1$ and $\S_2$. In the general non-i.i.d.\ setting, we provide and discuss a necessary and sufficient condition on the function $F$ for the joint reliability of two systems to have a signature-based decomposition (Propositions~\ref{prop:mainDEC9} and \ref{prop:mainDEC9z}).

In the second part of this paper (Sections 4--7) we fruitfully apply the results obtained in Sections 2 and 3 to the investigation of the signature and reliability of multistate systems made up of two-state components in the general dependent setting. More specifically it is shown that our results offer a general framework and an efficient tool to study such multistate systems in the dependent setting and bring simplifications and generalizations to recent results obtained, e.g., by Da and Hu~\cite{DaHu13} and Gertsbakh et al.~\cite{GerSchSpi12} on these topics.

\section{The joint probability signature}

In this section we mainly show how Eq.~\eqref{eq:asad678} can be extended to joint structure signatures and how Eq.~\eqref{eq:sdf67} can be extended to joint probability signatures.

Recall first that the \emph{tail structure signature} of a system $\S$ (a concept introduced in \cite{Bol01} and named so in \cite{GerShpSpi11}) is the $(n+1)$-tuple $\overline{\mathbf{S}}=(\overline{S}_0,\ldots,\overline{S}_n)$ defined by
\begin{equation}\label{eq:f5sffs}
\overline{S}_k ~=~ \sum_{i=k+1}^ns_i ~=~ \sum_{\textstyle{A\subseteq C\atop |A|=n-k}}\frac{1}{{n\choose |A|}}\,\phi(A){\,},\quad k=0,\ldots,n,
\end{equation}
where the latter expression immediately follows from \eqref{eq:asad678}. (Here and throughout we use the usual convention that $\sum_{i=k+1}^nx_i=0$ when $k=n$.) Conversely, the structure signature $\mathbf{s}$ can also be easily retrieved from $\overline{\mathbf{S}}$ by using \eqref{eq:asad678}, that is, by computing $s_k=\overline{S}_{k-1}-\overline{S}_k$ for $k=1,\ldots,n$. Thus, assuming that the component lifetimes are i.i.d.\ and continuous, we see that $\overline{S}_k = \Pr(T_\S>T_{k:n})$ is the probability that the system $\S$ survives beyond the $k$-th failure (with the usual convention that $T_{0:n}=0$).

Similarly, the \emph{tail probability signature} of a system $\S$ (see \cite{MarMatSpi15}) is the $(n+1)$-tuple $\overline{\mathbf{P}}=(\overline{P}_0,\ldots,\overline{P}_n)$, where
\begin{equation}\label{eq:f5sff}
\overline{P}_k ~=~ \sum_{i=k+1}^np_i ~=~ \sum_{\textstyle{A\subseteq C\atop |A|=n-k}}q(A)\,\phi(A){\,},\quad k=0,\ldots,n.
\end{equation}
Conversely, the probability signature $\mathbf{p}$ can be easily retrieved from $\overline{\mathbf{P}}$ by using \eqref{eq:sdf67}, that is, by computing $p_k=\overline{P}_{k-1}-\overline{P}_k$ for $k=1,\ldots,n$. Thus, assuming only that the function $F$ has no ties, we see that $\overline{P}_k=\Pr(T_\S>T_{k:n})$ is the probability that the system $\S$ survives beyond the $k$-th failure.

Since the tail signatures proved to be much easier to handle than the standard signatures in many computation problems, it is natural to extend these concepts to the bivariate case.

Given two systems $\S_1=(C,\phi_1,F)$ and $\S_2=(C,\phi_2,F)$ for which the component lifetimes are i.i.d.\ and continuous, the \emph{joint tail structure signature} is defined as the square matrix $\overline{\mathbf{S}}$ of order $n+1$ whose $(k,l)$-entry is the probability
$$
\overline{S}_{k,l} ~=~ \Pr(T_{\S_1}>T_{k:n}~\mbox{and}~T_{\S_2}>T_{l:n}),\quad k,l=0,\ldots,n.
$$
Similarly, assuming only that the function $F$ has no ties, the \emph{joint tail probability signature} is the square matrix $\overline{\mathbf{P}}$ of order $n+1$ whose $(k,l)$-entry is the probability
$$
\overline{P}_{k,l} ~=~ \Pr(T_{\S_1}>T_{k:n}~\mbox{and}~T_{\S_2}>T_{l:n}),\quad k,l=0,\ldots,n.
$$
Thus, $\overline{P}_{k,l}$ is the probability that the system $\S_1$ survives beyond the $k$-th failure and the system $\S_2$ survives beyond the $l$-th failure. In particular,
\begin{eqnarray*}
\overline{P}_{k,l} &=& 0, \quad\mbox{if $~k=n~$ or $~l=n$}{\,},\\
\overline{P}_{k,0} &=& \Pr(T_{\S_1}>T_{k:n}){\,},\quad k=0,\ldots,n,\\
\overline{P}_{0,l} &=& \Pr(T_{\S_2}>T_{l:n}){\,},\quad l=0,\ldots,n,\\
\overline{P}_{0,0} &=& 1.
\end{eqnarray*}

The following straightforward proposition provides the conversion formulas between the matrices $\mathbf{p}$ and $\overline{\mathbf{P}}$. The corresponding formulas between the matrices $\mathbf{s}$ and $\overline{\mathbf{S}}$ are given in Corollary~\ref{cor:ConvForm5873s}.

\begin{proposition}\label{prop:ConvForm5873}
We have
\begin{equation}\label{eq:CoFo1}
\overline{P}_{k,l} ~=~ \sum_{i=k+1}^n\sum_{j=l+1}^np_{i,j}{\,},\quad k,l=0,\ldots,n,
\end{equation}
and
\begin{equation}\label{eq:CoFo2}
p_{k,l} ~=~ \overline{P}_{k-1,l-1}-\overline{P}_{k,l-1}-\overline{P}_{k-1,l}+\overline{P}_{k,l}{\,},\quad  k,l=1,\ldots,n.
\end{equation}
\end{proposition}

%

\begin{corollary}\label{cor:ConvForm5873s}
We have
\[
\overline{S}_{k,l} ~=~ \sum_{i=k+1}^n\sum_{j=l+1}^ns_{i,j}{\,},\quad k,l=0,\ldots,n,
\]
and
\[
s_{k,l} ~=~ \overline{S}_{k-1,l-1}-\overline{S}_{k,l-1}-\overline{S}_{k-1,l}+\overline{S}_{k,l}{\,},\quad  k,l=1,\ldots,n.
\]
\end{corollary}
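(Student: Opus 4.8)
The plan is to recognize Corollary~\ref{cor:ConvForm5873s} as the specialization of Proposition~\ref{prop:ConvForm5873} to the continuous i.i.d.\ setting, so that essentially no new work is required. First I would observe that continuous and i.i.d.\ component lifetimes automatically satisfy the no-ties hypothesis, since $\Pr(T_i=T_j)=0$ whenever $T_i$ and $T_j$ are independent and continuous. Hence the entire framework underlying the bivariate probability signature applies verbatim in this case, and one may simply compare the two pairs of definitions.

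Next I would check that, under this stronger assumption, the defining quantities coincide entry by entry. Indeed, the bivariate structure signature entry $s_{k,l}=\Pr(T_{\S_1}=T_{k:n}\text{ and }T_{\S_2}=T_{l:n})$ is literally the same probability as the bivariate probability signature entry $p_{k,l}$, and likewise $\overline{S}_{k,l}=\Pr(T_{\S_1}>T_{k:n}\text{ and }T_{\S_2}>T_{l:n})=\overline{P}_{k,l}$ for all indices. Thus $s_{k,l}=p_{k,l}$ and $\overline{S}_{k,l}=\overline{P}_{k,l}$ throughout, and substituting these identities into the two formulas of Proposition~\ref{prop:ConvForm5873} immediately yields the two formulas claimed in the corollary.

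There is no genuine obstacle here; the only point requiring care is confirming that the continuous i.i.d.\ hypothesis is truly a particular instance of the no-ties hypothesis, which is what licenses the appeal to the proposition. If one preferred a self-contained argument, the proof of the proposition could be replayed directly in the i.i.d.\ case: the first formula follows by partitioning the joint survival event $\{T_{\S_1}>T_{k:n},\,T_{\S_2}>T_{l:n}\}$ according to which order statistics cause each system to fail, namely some $i>k$ for $\S_1$ and some $j>l$ for $\S_2$; the second formula is then the routine inclusion--exclusion (finite-difference) inversion of the first.
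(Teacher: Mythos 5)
Your proposal is correct and matches the paper's own reasoning: the paper presents this corollary as an immediate specialization of Proposition~\ref{prop:ConvForm5873}, noting that under continuous i.i.d.\ lifetimes (a particular case of the no-ties hypothesis) one has $s_{k,l}=p_{k,l}$ and $\overline{S}_{k,l}=\overline{P}_{k,l}$ by definition, so the conversion formulas carry over verbatim. Your backup self-contained argument is likewise the same partition/finite-difference reasoning used in the proof of the proposition itself.
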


The conversion formulas given in Proposition~\ref{prop:ConvForm5873} and Corollary~\ref{cor:ConvForm5873s} show that all the information contained in the matrix $\mathbf{p}$ (resp.\ the matrix $\mathbf{s}$) is completely encoded in the matrix $\overline{\mathbf{P}}$ (resp.\ the matrix $\overline{\mathbf{S}}$) and vice versa.

We will now show how to compute the matrix $\overline{\mathbf{P}}$ from $\phi_1$, $\phi_2$, and $F$ (see Theorem~\ref{thm:main3}) and similarly for the matrix $\overline{\mathbf{S}}$ (see Corollary~\ref{cor:main3}). These results actually constitute direct generalizations of Eqs.~\eqref{eq:f5sffs} and \eqref{eq:f5sff}, which are the `tail' versions of Eqs.~\eqref{eq:asad678} and \eqref{eq:sdf67}, to joint structure signatures and joint probability signatures, respectively.

Let us first introduce the bivariate version of the relative quality function.

\begin{definition}\label{de:jrqf4}
The \emph{joint relative quality function} associated with the joint c.d.f.\ $F$ is the symmetric function $q\colon 2^{[n]}\times 2^{[n]}\to [0,1]$ defined by
$$
q(A,B) ~=~ \Pr\Big(\max_{i\in C\setminus A}T_i < \min_{j\in A}T_j~\mbox{ and }~\max_{i\in C\setminus B}T_i < \min_{j\in B}T_j\Big),
$$
with the convention that $q(A,\varnothing)=q(A,C)=q(A)$ for every $A\subseteq C$ and $q(\varnothing,B)=q(C,B)=q(B)$ for every $B\subseteq C$.
\end{definition}

By definition, the joint relative quality function satisfies the following properties: for every $A,B\subseteq C$, we have $q(A,B)=q(B,A)$ and $q(A,A)=q(A)$. Moreover, the number $q(A,B)$ is the probability that the best $|A|$ components are precisely those in $A$ and that the best $|B|$ components are precisely those in $B$. In particular, we have $q(A,B)=0$ whenever $A\nsubseteq B$ and $B\nsubseteq A$.

A important link between the relative quality function and the joint relative quality function is given in the following proposition.

\begin{proposition}\label{prop:5re86}
For every $A\subseteq C$ and every $l\in\{0,\ldots,n\}$ we have
\[
\sum_{\textstyle{B\subseteq C\atop |B|=l}}q(A,B) ~=~ q(A).
\]
In particular, for every $k,l\in\{0,\ldots,n\}$ we have
\[
\sum_{\textstyle{A\subseteq C\atop |A|=k}}\sum_{\textstyle{B\subseteq C\atop |B|=l}}q(A,B) ~=~ 1.
\]
\end{proposition}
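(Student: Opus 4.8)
The plan is to work throughout with the probabilistic meaning of the function $q$. For a subset $A\subseteq C$, write $E_A$ for the event $\{\max_{i\in[n]\setminus A}T_i<\min_{j\in A}T_j\}$, that is, the event that $A$ is exactly the set of the $|A|$ longest-lived components; thus $\Pr(E_A)=q(A)$ and $q(A,B)=\Pr(E_A\cap E_B)$. Since $F$ has no ties, with probability one the lifetimes $T_1,\ldots,T_n$ are pairwise distinct, so there is almost surely a well-defined strict ranking of the components and, for each $m\in\{0,\ldots,n\}$, a unique set of the $m$ longest-lived components. This a.s.\ uniqueness is the engine behind both equalities.

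For the first equality I would invoke the vanishing property already recorded before the statement: $q(A,B)=0$ unless $A\subseteq B$ or $B\subseteq A$. Fix $A$ and $l\leqslant|A|$. If $|B|=l$ and $A\subseteq B$, then $|A|\leqslant|B|=l\leqslant|A|$, which forces $|A|=l$ and hence $A=B$, so in particular $B\subseteq A$. Consequently every term with $B\nsubseteq A$ vanishes, and the sum over all $B\subseteq C$ with $|B|=l$ reduces to the sum over those $B\subseteq A$ with $|B|=l$.

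For the second equality I would argue that the events $E_A\cap E_B$, as $B$ ranges over the $l$-subsets of $A$, form a partition of $E_A$ up to a null set. They are pairwise disjoint because the set of the $l$ longest components is a.s.\ unique, so $E_B\cap E_{B'}$ is null whenever $B\neq B'$. Their union is $E_A$ up to a null set because, on $E_A$, the $|A|$ longest components are exactly those in $A$, so the $l$ longest components (with $l\leqslant|A|$) form some subset $B\subseteq A$ of size $l$; that is, $E_A\subseteq\bigcup_{B\subseteq A,\,|B|=l}E_B$ almost surely. Summing probabilities then yields $\sum_{B\subseteq A,\,|B|=l}q(A,B)=\Pr(E_A)=q(A)$. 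The boundary cases $l=0$ and $l=|A|$ reduce respectively to $q(A,\varnothing)=q(A)$ and $q(A,A)=q(A)$, in agreement with the stated conventions.

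Finally, for the ``in particular'' statement I would first note, by taking $A=C$ in the first equality and using $q(C,B)=q(B)$, that $\sum_{|B|=k}q(B)=1$ for every $k$. Given arbitrary $k,l\in[n]$, the symmetry $q(A,B)=q(B,A)$ lets me assume without loss of generality that $l\leqslant k$. Then the first part gives $\sum_{|B|=l}q(A,B)=q(A)$ for each $A$ of size $k$, and summing over all such $A$ while using $\sum_{|A|=k}q(A)=1$ produces the value $1$. The only genuine subtlety in the whole argument is the measure-theoretic bookkeeping of the ``no ties'' hypothesis, which guarantees the a.s.\ uniqueness of the top-$m$ set and thereby both the disjointness and the exhaustiveness needed in the partition step; everything else is cardinality counting together with the symmetry of $q$.
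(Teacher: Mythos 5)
Your proof is correct, and it follows essentially the route the paper intends: the paper states this proposition without an explicit proof (``We then easily derive\dots''), relying on exactly the ingredients you use, namely the vanishing property $q(A,B)=0$ when neither $A\subseteq B$ nor $B\subseteq A$, the stated conventions, and the almost-sure uniqueness (no ties) of the top-$l$ set, which makes the events $E_A\cap E_B$, for $B\subseteq A$ with $|B|=l$, an a.s.\ partition of $E_A$. Your write-up is a faithful and complete filling-in of that derivation, including the boundary cases $l=0$ and $l=|A|$ and the symmetry reduction in the ``in particular'' statement.
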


The joint relative quality function can also be computed in terms of probabilities of events labeled by the permutations of $[n]=\{1,\ldots,n\}$. Indeed, denote by $\Sigma_n$ the group of permutations of $[n]$ and define the events
\[
E_\sigma ~=~ (T_{\sigma(1)}<\cdots <T_{\sigma(n)}){\,},\qquad \sigma\in \Sigma_n.
\]
Since the function $F$ has no ties, the collection of events $\{E_\sigma:\sigma\in \Sigma_n\}$ forms a partition almost everywhere of the sample space $\left[0,+\infty\right[^n$. Moreover, it is clear that
\begin{multline*}
\Big(\max_{i\in C\setminus A}T_i < \min_{j\in A}T_j~\mbox{ and }~\max_{i\in C\setminus B}T_i < \min_{j\in B}T_j\Big)\\
~=~ \bigcup_{\textstyle{\sigma\in \Sigma_n\, :\, \{\sigma(n-|A|+1),\ldots,\sigma(n)\}=A \atop
\{\sigma(n-|B|+1),\ldots,\sigma(n)\}=B}}E_{\sigma}
\end{multline*}
almost everywhere. We thus have
\begin{multline}
q(A,B)\\
 ~=~ \sum_{\textstyle{\sigma\in \Sigma_n\, :\, \{\sigma(n-|A|+1),\ldots,\sigma(n)\}=A \atop
\{\sigma(n-|B|+1),\ldots,\sigma(n)\}=B}}\Pr(T_{\sigma(1)}<\cdots <T_{\sigma(n)}).\label{eq:sadf65sdf}
\end{multline}
We now show that this expression can be easily computed in the i.i.d.\ and continuous case, or more generally whenever the events $E_\sigma$ $(\sigma \in \Sigma_n)$ are equally likely, for which we have $\Pr(E_\sigma)=1/n!$ since the function $F$ has no ties.

\begin{definition}\label{de:q0}
Define the symmetric function $q_0\colon 2^{[n]}\times 2^{[n]}\to [0,1]$ as
\begin{equation}\label{eq:qExch5}
q_0(A,B) ~=~
\begin{cases}
\frac{(n-|A|)!{\,}(|A|-|B|)!{\,}|B|!}{n!_{\mathstrut}} & \mbox{if $B\subseteq A$},\\
\frac{(n-|B|)!{\,}(|B|-|A|)!{\,}|A|!}{n!_{\mathstrut}} & \mbox{if $A\subseteq B$},\\
0 & \mbox{otherwise}.
\end{cases}
\end{equation}
\end{definition}

\begin{proposition}\label{prop:7ert}
If the events $E_\sigma$ $(\sigma \in \Sigma_n)$ are equally likely, then $q=q_0$.
\end{proposition}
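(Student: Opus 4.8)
The plan is to work directly from the permutation-based expression~\eqref{eq:sadf65sdf} for $q(A,B)$. Under the hypothesis that the events $E_\sigma$ are equally likely, each summand $\Pr(T_{\sigma(1)}<\cdots<T_{\sigma(n)})$ equals $1/n!$, so $q(A,B)$ is simply $1/n!$ times the number of permutations $\sigma\in\mathfrak{S}_n$ that satisfy both conditions $\{\sigma(n-|A|+1),\ldots,\sigma(n)\}=A$ and $\{\sigma(n-|B|+1),\ldots,\sigma(n)\}=B$. The entire proof thus reduces to a counting problem, with the three branches of the definition of $q_0$ corresponding to the three possible relations between $A$ and $B$.

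First I would record the key structural observation that the two ``top blocks'' of positions are nested. If $|B|\leqslant |A|$, then the positions $n-|B|+1,\ldots,n$ form a subset of the positions $n-|A|+1,\ldots,n$, so any $\sigma$ meeting both conditions forces $B\subseteq A$; symmetrically, $|A|\leqslant|B|$ forces $A\subseteq B$. Consequently, when neither $A\subseteq B$ nor $B\subseteq A$ holds, no such permutation exists, the count is $0$, and this matches $q_0(A,B)=0$, consistently with the vanishing of $q(A,B)$ already noted in the incomparable case.

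It then remains to treat the case $B\subseteq A$, the case $A\subseteq B$ following at once from the symmetry $q(A,B)=q(B,A)$ together with the evident symmetry of the definition of $q_0$. Writing $a=|A|$ and $b=|B|$, a permutation $\sigma$ satisfies both conditions precisely when it assigns the set $B$ to the positions $n-b+1,\ldots,n$, the set $A\setminus B$ to the positions $n-a+1,\ldots,n-b$, and the set $[n]\setminus A$ to the positions $1,\ldots,n-a$, with each of these three blocks filled arbitrarily. The number of such arrangements is $(n-a)!\,(a-b)!\,b!$, and dividing by $n!$ yields $q(A,B)=\frac{(n-|A|)!\,(|A|-|B|)!\,|B|!}{n!}=q_0(A,B)$, as required.

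Since the argument is purely combinatorial, I do not anticipate any genuine analytic obstacle; the only point demanding care is the bookkeeping of the three position-blocks and the verification that the nesting property truly excludes the incomparable case. As a sanity check I would confirm that the formula recovers the boundary conventions, e.g.\ that $B=\varnothing$ returns the univariate value $q_0(A,\varnothing)=\frac{(n-|A|)!\,|A|!}{n!}=1/\binom{n}{|A|}$.
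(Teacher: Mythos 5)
Your proposal is correct and follows exactly the paper's approach: the paper's proof consists of the single line ``use \eqref{eq:sadf65sdf} and count the relevant permutations,'' and your argument is precisely that count carried out in full, with the block decomposition $(n-|A|)!\,(|A|-|B|)!\,|B|!$ in the nested case and the nesting observation ruling out the incomparable case. No gaps; you have simply supplied the details the paper leaves to the reader.
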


\begin{proof}
We simply use Eq.~\eqref{eq:sadf65sdf} and count the relevant permutations.
\end{proof}

We now show how to compute the matrix $\overline{\mathbf{P}}$ from $\phi_1$, $\phi_2$, and $q$. This result together with Proposition~\ref{prop:ConvForm5873} clearly generalize Eq.~\eqref{eq:f5sff} to the case of two systems.

\begin{theorem}\label{thm:main3}
For every $k,l\in\{0,\ldots,n\}$ we have
\begin{equation}\label{eq:f5sffx}
\overline{P}_{k,l} ~=~ \sum_{|A|=n-k}{\,}\sum_{|B|=n-l}q(A,B)\phi_1(A)\phi_2(B).
\end{equation}
\end{theorem}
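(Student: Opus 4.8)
The plan is to condition on the relative ordering of the component lifetimes. Since $F$ has no ties, the events $E_\sigma$ $(\sigma\in\mathfrak{S}_n)$ partition the sample space almost everywhere with $\sum_\sigma\Pr(E_\sigma)=1$, so I would compute $\overline{P}_{k,l}$ by summing over $\sigma$ the probability of the joint survival event intersected with $E_\sigma$. The point of this reduction is that on each $E_\sigma$ the ordering of the lifetimes is fixed, which turns the probabilistic survival statement into a purely combinatorial one about the structure functions $\phi_1$ and $\phi_2$.

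First I would fix $\sigma$ and work on $E_\sigma$, where $T_{\sigma(1)}<\cdots<T_{\sigma(n)}$ and hence $T_{j:n}=T_{\sigma(j)}$ for every $j$. Immediately after the $k$-th failure the set of components still up is exactly $A_\sigma=\{\sigma(k+1),\ldots,\sigma(n)\}$, of cardinality $n-k$, and similarly $B_\sigma=\{\sigma(l+1),\ldots,\sigma(n)\}$, of cardinality $n-l$, is the set still up after the $l$-th failure. The key step---and the one I expect to be the delicate part---is to argue that on $E_\sigma$ the indicator of $\{T_{\S_1}>T_{k:n}\}$ equals $\phi_1(A_\sigma)$. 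Since $\phi_1$ is nondecreasing and (because $F$ has no ties) the component states change one at a time at the distinct instants $T_{\sigma(1)},\ldots,T_{\sigma(n)}$, the system $\S_1$ is functioning immediately after the $k$-th failure if and only if $\phi_1(A_\sigma)=1$; equivalently $T_{\S_1}>T_{\sigma(k)}=T_{k:n}$ iff $\phi_1(A_\sigma)=1$. The same argument gives that the indicator of $\{T_{\S_2}>T_{l:n}\}$ equals $\phi_2(B_\sigma)$, so on $E_\sigma$ the indicator of the joint event $\{T_{\S_1}>T_{k:n}\text{ and }T_{\S_2}>T_{l:n}\}$ equals the product $\phi_1(A_\sigma)\,\phi_2(B_\sigma)$.

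It then remains to assemble the pieces. Multiplying by $\Pr(E_\sigma)$ and summing over $\sigma$ gives
\[
\overline{P}_{k,l} ~=~ \sum_{\sigma\in\mathfrak{S}_n}\phi_1(A_\sigma)\,\phi_2(B_\sigma)\,\Pr(E_\sigma).
\]
I would then group the permutations according to the values $A=A_\sigma$ (ranging over subsets of size $n-k$) and $B=B_\sigma$ (of size $n-l$); since $n-|A|+1=k+1$ and $n-|B|+1=l+1$, the inner sum of $\Pr(E_\sigma)$ over the permutations with $A_\sigma=A$ and $B_\sigma=B$ is precisely $q(A,B)$ by formula \eqref{eq:sadf65sdf}. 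This yields the announced identity \eqref{eq:f5sffx}. Finally I would check the boundary cases: for $k=n$ (resp.\ $l=n$) the only admissible $A$ (resp.\ $B$) is $\varnothing$, and semicoherence gives $\phi_1(\varnothing)=0$ (resp.\ $\phi_2(\varnothing)=0$), so the right-hand side vanishes, in agreement with $\overline{P}_{n,l}=\overline{P}_{k,n}=0$.
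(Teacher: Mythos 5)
Your proposal is correct and follows essentially the same route as the paper's proof: partition the sample space by the ordering events $E_\sigma$, observe that on $E_\sigma$ the joint survival event is decided by $\phi_1(\{\sigma(k+1),\ldots,\sigma(n)\})$ and $\phi_2(\{\sigma(l+1),\ldots,\sigma(n)\})$, then group permutations by these sets and invoke Eq.~\eqref{eq:sadf65sdf} to recognize $q(A,B)$. Your added justification of the indicator identity and the boundary check at $k=n$ or $l=n$ are just more detailed renderings of steps the paper treats as immediate.
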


\begin{proof}
We just consider the event
\[
E_{k,l} ~=~ (T_{\S_1}>T_{k:n}~\mbox{and}~T_{\S_2}>T_{l:n})
\]
that defines $\overline{P}_{k,l}$. The fact that a given observed tuple of lifetimes $(t_1,\ldots,t_n)$ belongs to
$E_{k,l}$ depends only on the ordering of lifetimes. Therefore, for each permutation $\sigma\in \Sigma_n$, we have either
$E_\sigma\subseteq E_{k,l}$ or $E_\sigma\cap E_{k,l}=\varnothing$. Moreover, the former case occurs if and only if
\[
\phi_1(\{\sigma(k+1),\ldots,\sigma(n)\}) ~=~ \phi_2(\{\sigma(l+1),\ldots,\sigma(n)\}) ~=~ 1.
\]
We thus have
$$
\overline{P }_{k,l} ~=~ \Pr(E_{k,l}) ~=~ \Pr\Bigg(\bigcup_{\textstyle{\sigma\in \Sigma_n\, :\, \phi_1(\{\sigma(k+1),\ldots,\sigma(n)\})=1 \atop \phi_2(\{\sigma(l+1),\ldots,\sigma(n)\})=1}}E_\sigma\Bigg),
$$
that is,
\begin{eqnarray*}
\overline{P }_{k,l} &=& \sum_{\sigma\in \Sigma_n}\Pr(E_\sigma)~\phi_1(\{\sigma(k+1),\ldots,\sigma(n)\})\\
&& \times~
\phi_2(\{\sigma(l+1),\ldots,\sigma(n)\}).
\end{eqnarray*}
To compute this sum we group the terms for which $\{\sigma(k+1),\ldots,\sigma(n)\}$ is a given set $A$ of cardinality $n-k$ and $\{\sigma(l+1),\ldots,\sigma(n)\}$ is a given set $B$ of cardinality $n-l$ (with a necessary inclusion relation) and then sum over all the possibilities for such $A$'s and $B$'s. The result then follows from Eq.~\eqref{eq:sadf65sdf}.
\end{proof}

As mentioned above, Eq.~\eqref{eq:f5sffx} generalizes Eq.~\eqref{eq:f5sff}. Moreover, if $\overline{\mathbf{P}}^{1}$ (resp.\ $\overline{\mathbf{P}}^{2}$) denotes the tail probability signature of the system $\S_1$ (resp.\ $\S_2$), from Eqs.~\eqref{eq:f5sff} and \eqref{eq:f5sffx} it follows that
\begin{eqnarray*}
\overline{P}_{k,0} ~=~ \overline{P}_k^{1} &&\text{for $k=0,\ldots,n$},\label{eq:s675f1}\\
\overline{P}_{0,l} ~=~ \overline{P}_l^{2} &&\text{for $l=0,\ldots,n$}.\label{eq:s675f2}
\end{eqnarray*}
Also, if $\mathbf{p}^1$ (resp.\ $\mathbf{p}^2$) denotes the probability signature of the system $\S_1$ (resp.\ $\S_2$), we clearly have
\begin{eqnarray}
\sum_{l=1}^n p_{k,l} ~=~ p_k^1 &&\text{for $k=1,\ldots,n$},\label{eq:s675f1}\\
\sum_{k=1}^n p_{k,l} ~=~ p_l^2 &&\text{for $l=1,\ldots,n$}.\label{eq:s675f2}
\end{eqnarray}

From Theorem~\ref{thm:main3} we immediately derive the following corollary, which concerns the joint tail structure signature.

\begin{corollary}\label{cor:main3}
For every $k,l\in\{0,\ldots,n\}$ we have
\begin{equation}\label{eq:main3cor}
\overline{S}_{k,l} ~=~ \sum_{|A|=n-k}{\,}\sum_{|B|=n-l}q_0(A,B)\phi_1(A)\phi_2(B),
\end{equation}
\end{corollary}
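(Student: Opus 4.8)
The plan is to deduce the corollary directly from Theorem~\ref{thm:main3}, since the only difference between \eqref{eq:main3cor} and \eqref{eq:f5sffx} is that the bivariate relative quality function $q$ has been specialized to $q_0$. First I would observe that the tail bivariate structure signature $\overline{\mathbf{S}}$ is defined precisely under the hypothesis that the component lifetimes are continuous and i.i.d. In that setting the matrices $\overline{\mathbf{S}}$ and $\overline{\mathbf{P}}$ have exactly the same defining probability, namely $\Pr(T_{\S_1}>T_{k:n}~\mbox{and}~T_{\S_2}>T_{l:n})$, so that $\overline{S}_{k,l}=\overline{P}_{k,l}$ for every $k,l\in\{0,\ldots,n\}$.

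Next I would invoke the exchangeability of continuous i.i.d.\ lifetimes: the joint distribution $F$ is then a symmetric function, and hence for every permutation $\sigma\in\mathfrak{S}_n$ the event $E_\sigma$ carries probability $1/n!$. In other words, the events $E_\sigma$ $(\sigma\in\mathfrak{S}_n)$ are equally likely, which is exactly the hypothesis of the proposition established just before Theorem~\ref{thm:main3}; that proposition therefore yields $q=q_0$.

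Finally I would substitute these two identities into the conclusion of Theorem~\ref{thm:main3}. Replacing $\overline{P}_{k,l}$ by $\overline{S}_{k,l}$ and $q$ by $q_0$ in \eqref{eq:f5sffx} produces \eqref{eq:main3cor}, which is the desired formula. There is no genuine obstacle here, as the entire analytic content is already carried by Theorem~\ref{thm:main3}; the single point worth recording is that continuous i.i.d.\ lifetimes are exchangeable, and it is this exchangeability that guarantees the equal-likelihood hypothesis needed to pass from $q$ to $q_0$.
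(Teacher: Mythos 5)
Your proof is correct and matches the paper's intent exactly: the paper states this as an ``immediate corollary'' of Theorem~\ref{thm:main3}, the implicit argument being precisely the one you spell out, namely that under the continuous i.i.d.\ hypothesis one has $\overline{S}_{k,l}=\overline{P}_{k,l}$ and the events $E_\sigma$ are equally likely, so the earlier proposition gives $q=q_0$. Your write-up simply makes explicit the two substitutions the paper leaves to the reader.
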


\begin{remark}\label{rem:as5f}
Corollary~\ref{cor:ConvForm5873s} together with Corollary~\ref{cor:main3} generalize Eq.~\eqref{eq:f5sffs} (i.e., the `tail' version of Boland's formula \eqref{eq:asad678}) to the case of two systems. This shows that even though the matrices $\mathbf{s}$ and $\overline{\mathbf{S}}$ are defined in the i.i.d.\ case, they are also combinatorial objects associated to the systems. Throughout this paper we will thus consider these matrices as defined in Corollaries~\ref{cor:ConvForm5873s} and \ref{cor:main3}, in full accordance with the univariate case.
\end{remark}

Let us now consider a basic example to help the reader get familiar with these results.

\begin{example}
Applying Corollary~\ref{cor:main3} and then Corollary~\ref{cor:ConvForm5873s} to Example~\ref{ex:1}, we obtain the following matrices
$$
\overline{\mathbf{S}} ~=~ \frac{1}{12}
\begin{bmatrix}
12 & 9 & 4 & 0 & 0\\
6 & 3 & 1 & 0 & 0\\
2 & 1 & 0 & 0 & 0\\
0 & 0 & 0 & 0 & 0\\
0 & 0 & 0 & 0 & 0
\end{bmatrix}
\quad\mbox{and}\quad
\mathbf{s} ~=~ \frac{1}{12}
\begin{bmatrix}
0 & 3 & 3 & 0\\
2 & 1 & 1 & 0\\
1 & 1 & 0 & 0\\
0 & 0 & 0 & 0
\end{bmatrix}
.
$$
\end{example}

\begin{remark}
Formula~\eqref{eq:s675f1} shows that, by adding the entries in each row of the matrix $\mathbf{s}$, we obtain the structure signature $\mathbf{s}^1$ (with four coordinates) of the system $\S_1$. Applying this fact to the example above, we immediately obtain $\mathbf{s}^1=\frac{1}{12}(6,4,2,0)$. However, it is important to remember that $\S_1$ has four components, two of which are null or irrelevant. As detailed in \cite{NavSamBalBha08}, from this signature it is not difficult to compute the signature of the (coherent) system obtained from $\S_1$ by ignoring the irrelevant components.
\end{remark}

It is clear that Theorem~\ref{thm:main3} easily generalizes to the case of $m$ systems
$$
\S_1=(C,\phi_1,F),\ldots,\S_m=(C,\phi_m,F).
$$
Indeed, using an obvious notation, for $k_1,\ldots,k_m\in\{0,\ldots,n\}$ we then have
\begin{multline*}
\overline{P}_{k_1,\ldots,k_m}\\
~=~ \sum_{|A_1|=n-k_1}\cdots\sum_{|A_m|=n-k_m}q(A_1,\ldots,A_m){\,}\phi_1(A_1)\cdots\phi_m(A_m).
\end{multline*}
The same applies to Corollary~\ref{cor:main3}. Moreover, if there exists a permutation $\sigma\in \Sigma_m$ such that $A_{\sigma(m)}\subseteq\cdots\subseteq A_{\sigma(1)}$, then
\begin{multline*}
q_0(A_1,\ldots,A_m)\\
~=~ \frac{(n-|A_{\sigma(1)}|)!{\,}(|A_{\sigma(1)}|-|A_{\sigma(2)}|)!{\,}\cdots{\,} |A_{\sigma(m)}|!}{n!}{\,}.
\end{multline*}
Otherwise, $q_0(A_1,\ldots,A_m)=0$.

\section{Signature-based decomposition of the joint reliability}

Navarro et al.~\cite{NavSamBal13} recently provided a signature-based decomposition of the joint reliability function of two systems $\S_1=(C,\phi_1,F)$ and $\S_2=(C,\phi_2,F)$ whose components have continuous and i.i.d.\ lifetimes.

In this section we provide the generalization of this decomposition to arbitrary distributions of the component lifetimes. More precisely, we give necessary and sufficient conditions on the joint c.d.f.\ $F$ for the signature-based decomposition to hold for every pair of systems. Note that in this general setting we consider both the decomposition with respect to the structure signature (seen as a combinatorial object; see Corollaries~\ref{cor:ConvForm5873s} and \ref{cor:main3} and Remark~\ref{rem:as5f}) and the decomposition with respect to the probability signature.


For every $j\in C$ and every $t\geqslant 0$, let us denote by $X_j(t)=\mathrm{Ind}(T_j>t)$ the state variable of component $j$ at time $t$. Let us also consider the state vector $\mathbf{X}(t)=(X_1(t),\ldots,X_n(t))$ at time $t\geqslant 0$.

It was proved in \cite{MarMatWal11} (see also \cite{DukMar08,NavSamBalBha08,Sam85} for earlier works) that the reliability function of any system $\S=(C,\phi,F)$ is given by
\begin{equation}\label{eq:sf57}
\overline{F}_{\mathcal{S}}(t) ~=~ \sum_{\bfx\in\{0,1\}^n}\phi(\bfx){\,}\Pr(\mathbf{X}(t)=\bfx),\qquad t\geqslant 0,
\end{equation}
and that if the state variables $X_1(t),\ldots,X_n(t)$ are exchangeable for a given $t\geqslant 0$, which means that
$$
\Pr(\mathbf{X}(t)=\mathbf{x}) ~=~ \Pr(\mathbf{X}(t)=\sigma(\mathbf{x})){\,},\quad \bfx\in\{0,1\}^n,~\sigma\in \Sigma_n,
$$
where $\sigma(\mathbf{x})=(x_{\sigma(1)},\ldots,x_{\sigma(n)})$, then the reliability $\overline{F}_\S(t)$ of the system at time $t\geqslant 0$ admits the following signature-based decomposition:
\begin{equation}\label{eq:837gf465}
\overline{F}_{\mathcal{S}}(t) ~=~ \sum_{k=1}^n s_k{\,}\overline{F}_{k:n}(t){\,},
\end{equation}
where $s_k$ is given by Eq.~\eqref{eq:asad678} and $\overline{F}_{k:n}(t)=\Pr(T_{k:n}>t)$.

We actually have the following stronger result, which essentially states that the exchangeability of the state variables is a necessary and sufficient condition for the signature-based decomposition of the reliability to hold.

\begin{proposition}[see {\cite[Thm 4]{MarMatWal11}}]\label{prop:T1}
Let $t\geqslant 0$ be fixed. If the state variables $X_1(t),\ldots,X_n(t)$ are exchangeable, then \eqref{eq:837gf465} holds for any semicoherent system $\mathcal{S}$. Conversely, if $n\geqslant 3$ and if \eqref{eq:837gf465} holds for any coherent system $\mathcal{S}$, then the state variables $X_1(t),\ldots,X_n(t)$ are exchangeable.
\end{proposition}

The following proposition gives a variant of Proposition~\ref{prop:T1} in which the reliability function is decomposed with respect to the probability signature.

\begin{proposition}[see {\cite[Thm 6]{MarMatWal11}}]\label{prop:T2}
Let $t\geqslant 0$ be fixed. If the joint c.d.f.\ $F$ satisfies the condition
\begin{equation}\label{eq:35gh}
\Pr(\mathbf{X}(t)=\mathbf{x}) ~=~ q(\mathbf{x}){\,}\sum_{\textstyle{\mathbf{u}\in\{0,1\}^n\atop |\mathbf{u}|=|\mathbf{x}|}}\Pr(\mathbf{X}(t)=\mathbf{u})
\end{equation}
for any $\bfx\in\{0,1\}^n$, then we have
\begin{equation}\label{eq:837gf4652}
\overline{F}_{\mathcal{S}}(t) ~=~ \sum_{k=1}^n p_k{\,}\overline{F}_{k:n}(t)
\end{equation}
for any semicoherent system $\mathcal{S}$. Conversely, if $n\geqslant 3$ and if \eqref{eq:837gf4652} holds for any coherent system $\mathcal{S}$, then the joint c.d.f.\ $F$ satisfies condition \eqref{eq:35gh} for any $\bfx\in\{0,1\}^n$.
\end{proposition}

We now generalize Eq.~\eqref{eq:sf57} and Propositions~\ref{prop:T1} and \ref{prop:T2} to the case where two systems $\S_1=(C,\phi_1,F)$ and $\S_2=(C,\phi_2,F)$ are simultaneously considered. To this aim we will consider the joint distribution of the state vectors $\mathbf{X}(t_1)$ and $\mathbf{X}(t_2)$ at times $t_1\geqslant 0$ and $t_2\geqslant 0$. To simplify the notation we regard these two vectors together as a single object, namely a $2\times n$ random array ${\mathbf{X}(t_1)\choose \mathbf{X}(t_2)}$.

Let us first express the joint reliability function in terms of the component state vectors. This immediate result merely generalizes Eq.~\eqref{eq:sf57} to the case of two systems.

\begin{proposition}\label{prop:f5fd}
We have
\begin{equation}\label{eq:f5fd00}
\overline{F}_{\mathcal{S}_1,\mathcal{S}_2}(t_1,t_2)~=~\sum_{\bfx,\bfy\in\{0,1\}^n}\phi_1(\bfx)\phi_2(\bfy){\,}
\Pr({\mathbf{X}(t_1)\choose \mathbf{X}(t_2)}={\mathbf{x}\choose\mathbf{y}}).
\end{equation}
\end{proposition}

\begin{proof}
We have
\begin{eqnarray*}
  \overline{F}_{\mathcal{S}_1,\mathcal{S}_2}(t_1,t_2) &=& \Pr(\phi_1(\mathbf{X}(t_1))=1 ~\mbox{and}~\phi_2(\mathbf{X}(t_2))=1) \\
  &=& \sum_{\textstyle{\bfx\in\{0,1\}^n\atop \phi_1(\bfx)=1}}{\,}\sum_{\textstyle{\bfy\in\{0,1\}^n\atop \phi_2(\bfy)=1}}\Pr({\mathbf{X}(t_1)\choose \mathbf{X}(t_2)}={\mathbf{x}\choose\mathbf{y}}),
\end{eqnarray*}
which proves the result.
\end{proof}

By applying Proposition~\ref{prop:f5fd} to the joint reliability function of a $k$-out-of-$n$ system and an $l$-out-of-$n$ system, namely
$$
\overline{F}_{k:n,l:n}(t_1,t_2) ~=~ \Pr(T_{k:n}>t_1 ~\mbox{and}~T_{l:n}>t_2)
$$
($k,l=1,\ldots,n$) we immediately obtain the following corollary.

\begin{corollary}\label{cor:sf6}
We have
$$
\overline{F}_{k:n,l:n}(t_1,t_2) ~=~ \sum_{\textstyle{\bfx\in\{0,1\}^n\atop |\bfx|\geqslant n-k+1}}{\,}\sum_{\textstyle{\bfy\in\{0,1\}^n\atop |\bfy|\geqslant n-l+1}}\Pr({\mathbf{X}(t_1)\choose \mathbf{X}(t_2)}={\mathbf{x}\choose\mathbf{y}}).
$$
\end{corollary}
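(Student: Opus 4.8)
The plan is to apply Proposition~\ref{prop:f5fd} directly, taking for $\S_1$ and $\S_2$ the $k$-out-of-$n$ and $l$-out-of-$n$ systems, respectively, and then to identify their structure functions explicitly in terms of the Hamming weight. First I would pin down the structure function of a $k$-out-of-$n$ system, that is, a system whose lifetime is the order statistic $T_{k:n}$. The key observation is that $T_{k:n}>t$ holds if and only if fewer than $k$ components have failed by time $t$, i.e., if and only if at least $n-k+1$ components are still operating at time $t$. Since $|\mathbf{X}(t)|$ counts exactly the components operating at time $t$, this gives the equivalence $\mathrm{Ind}(T_{k:n}>t)=\mathrm{Ind}(|\mathbf{X}(t)|\geqslant n-k+1)$. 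Hence the structure function of the $k$-out-of-$n$ system is $\phi_1(\bfx)=\mathrm{Ind}(|\bfx|\geqslant n-k+1)$, and similarly $\phi_2(\bfy)=\mathrm{Ind}(|\bfy|\geqslant n-l+1)$ for the $l$-out-of-$n$ system.

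Next I would substitute these two structure functions into the formula of Proposition~\ref{prop:f5fd}. Since $\phi_1(\bfx)$ vanishes unless $|\bfx|\geqslant n-k+1$ and $\phi_2(\bfy)$ vanishes unless $|\bfy|\geqslant n-l+1$, while both equal $1$ on their respective supports, the factor $\phi_1(\bfx)\phi_2(\bfy)$ acts purely as a restriction of the double sum to those $\bfx,\bfy$ satisfying $|\bfx|\geqslant n-k+1$ and $|\bfy|\geqslant n-l+1$. This collapses the general decomposition of Proposition~\ref{prop:f5fd} into the claimed formula at once.

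There is essentially no substantive obstacle here: the only point requiring care is the order-statistic bookkeeping, namely the equivalence $T_{k:n}>t \iff |\mathbf{X}(t)|\geqslant n-k+1$, which fixes the correct threshold $n-k+1$ (rather than $k$) in the summation ranges. Once this threshold is correctly identified, everything else is a direct specialization of Proposition~\ref{prop:f5fd}, which is why the statement is asserted as an immediate corollary.
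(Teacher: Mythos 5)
Your proof is correct and matches the paper's approach exactly: the paper also obtains this corollary by specializing Proposition~\ref{prop:f5fd} to the $k$-out-of-$n$ and $l$-out-of-$n$ systems, whose structure functions are the indicators $\mathrm{Ind}(|\bfx|\geqslant n-k+1)$ and $\mathrm{Ind}(|\bfy|\geqslant n-l+1)$. Your explicit verification of the threshold $n-k+1$ via the equivalence $T_{k:n}>t \iff |\mathbf{X}(t)|\geqslant n-k+1$ is precisely the bookkeeping the paper leaves implicit when it calls the result immediate.
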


Let us now generalize Proposition~\ref{prop:T1} to the joint reliability function of two systems $\S_1=(C,\phi_1,F)$ and $\S_2=(C,\phi_2,F)$. To this extent we naturally consider the joint distribution of the component states at times $t_1\geqslant 0$ and $t_2\geqslant 0$, that is, the joint distribution of the $2\times n$ random array ${\mathbf{X}(t_1)\choose \mathbf{X}(t_2)}$. The columns of this array are the $\{0,1\}^2$-valued random variables
$$
V_1(t_1,t_2) ~=~ {X_1(t_1)\choose X_1(t_2)}~,~\ldots ~,~ V_n(t_1,t_2) ~=~ {X_n(t_1)\choose X_n(t_2)}.
$$
They are \emph{exchangeable} if
$$
(V_{\sigma(1)}(t_1,t_2),\ldots,V_{\sigma(n)}(t_1,t_2))
$$
and
$$
(V_1(t_1,t_2),\ldots,V_n(t_1,t_2))
$$
have the same distribution for any permutation $\sigma\in \Sigma_n$.
Equivalently, this exchangeability property holds if and only if
\begin{equation}\label{eq:cond.12}
\Pr({\mathbf{X}(t_1)\choose \mathbf{X}(t_2)}={\mathbf{x}\choose\mathbf{y}}) ~=~ \Pr({\mathbf{X}(t_1)\choose \mathbf{X}(t_2)}={\sigma(\mathbf{x})\choose\sigma(\mathbf{y})}){\,}
\end{equation}
for any $\bfx,\bfy\in\{0,1\}^n$ and any $\sigma\in \Sigma_n$.

\begin{lemma}\label{lemma:et5h}
Let $t_1,t_2\geqslant 0$ be fixed. Condition \eqref{eq:cond.12} holds for any $\bfx,\bfy\in\{0,1\}^n$ and any $\sigma\in \Sigma_n$ if and only if for any $\bfx,\bfy\in\{0,1\}^n$ we have
\begin{multline}
\Pr({\mathbf{X}(t_1)\choose \mathbf{X}(t_2)}={\mathbf{x}\choose\mathbf{y}})\\
~=~ q_0(\bfx,\bfy)\sum_{\textstyle{\mathbf{u}\in\{0,1\}^n\atop |\mathbf{u}|=|\bfx|}}\sum_{\textstyle{\mathbf{v}\in\{0,1\}^n\atop |\mathbf{v}|=|\bfy|}}\Pr({\mathbf{X}(t_1)\choose \mathbf{X}(t_2)}={\mathbf{u}\choose\mathbf{v}}).\label{eq:as9f7}
\end{multline}
\end{lemma}

\begin{proof}
Since the components are not repairable and by definition of $q_0(\bfx,\bfy)$, each side of Eqs.~\eqref{eq:cond.12} and~\eqref{eq:as9f7} reduces to zero whenever $\bfx\nleqslant\bfy$ and $\bfy\nleqslant\bfx$ (i.e., $\bfx$ and $\bfy$ are not ordered by inclusion). We then can assume for instance that $\bfx\geqslant\bfy$. The other case can be dealt with similarly.

Condition~\eqref{eq:cond.12} can be equivalently rewritten as
$$
\Pr({\mathbf{X}(t_1)\choose \mathbf{X}(t_2)}={\mathbf{x}\choose\mathbf{y}}) ~=~ \frac{1}{n!}\sum_{\sigma\in \Sigma_n}~\Pr({\mathbf{X}(t_1)\choose \mathbf{X}(t_2)}={\sigma(\mathbf{x})\choose\sigma(\mathbf{y})}){\,}.
$$

The right-hand side of this equation can also be rewritten as
$$
\frac{1}{n!}~\sum_{\bfu,\bfv\in\{0,1\}^n}n(\bfx,\bfy,\bfu,\bfv){\,}\Pr({\mathbf{X}(t_1)\choose \mathbf{X}(t_2)}={\bfu\choose\bfv}),
$$
where $n(\bfx,\bfy,\bfu,\bfv)$ represents the number of permutations $\sigma\in \Sigma_n$ such that ${\sigma(\mathbf{x})\choose\sigma(\mathbf{y})}={\bfu\choose\bfv}$. Clearly, we have $n(\bfx,\bfy,\bfu,\bfv)=0$ if $|\bfu|\neq |\bfx|$, or $|\bfv|\neq |\bfy|$, or $\bfu\ngeqslant\bfv$. Otherwise, the matrices ${\bfx\choose\bfy}$ and ${\bfu\choose\bfv}$ have the same number of columns of each possible type and hence we obtain
$$
n(\bfx,\bfy,\bfu,\bfv) ~=~ (n-|\bfx|)!{\,}(|\bfx|-|\bfy|)!{\,}|\bfy|!{\,}.
$$
This completes the proof.
\end{proof}

\begin{remark}\label{rem: abc01}
It follows directly from the proof of the previous lemma that condition \eqref{eq:cond.12} holds for any nonzero
$\bfx,\bfy\in\{0,1\}^n$ and any $\sigma\in \Sigma_n$ if and only if \eqref{eq:as9f7} holds for any nonzero
$\bfx,\bfy\in\{0,1\}^n$.
\end{remark}

We can now state the generalization of Proposition~\ref{prop:T1} to the case of two systems.

\begin{proposition}\label{prop:mainDEC9}
Let $t_1,t_2\geqslant 0$ be fixed. If the joint c.d.f.\ $F$ satisfies condition \eqref{eq:cond.12} for any nonzero $\bfx,\bfy\in\{0,1\}^n$, then we have
\begin{equation}\label{eq:sfd76}
\overline{F}_{\mathcal{S}_1,\mathcal{S}_2}(t_1,t_2) ~=~ \sum_{k=1}^n{\,}\sum_{l=1}^n s_{k,l}{\,}\overline{F}_{k:n,l:n}(t_1,t_2)
\end{equation}
for any semicoherent systems $\mathcal{S}_1$ and $\mathcal{S}_2$, where $s_{k,l}$ is defined in
Corollaries~\ref{cor:ConvForm5873s} and \ref{cor:main3}. Conversely, if $n\geqslant 3$ and if \eqref{eq:sfd76} holds for any coherent systems $\mathcal{S}_1$ and $\mathcal{S}_2$ (at times $t_1,t_2$), then
the joint c.d.f.\ $F$ satisfies condition \eqref{eq:cond.12} for any nonzero $\bfx,\bfy\in\{0,1\}^n$.
\end{proposition}

\begin{proof}
The left-hand side of \eqref{eq:sfd76} is given by \eqref{eq:f5fd00}.
Let us compute the right-hand side. For any sequence $x_k$, define the sequence $(\Delta x)_k=x_{k+1}-x_k$.
For two sequences $x_k$ and $y_k$ such that $x_0y_0=x_ny_n=0$, we then clearly have
\begin{equation}\label{eq:SumPP}
\sum_{k=1}^n(\Delta x)_{k-1}y_k = -\sum_{k=0}^{n-1} x_k(\Delta y)_k.
\end{equation}

For any double sequence $x_{k,l}$ $(k,l=0,\ldots,n)$, define $(\Delta_1x)_{k,l}=x_{k+1,l}-x_{k,l}$ and $(\Delta_2x)_{k,l}=x_{k,l+1}-x_{k,l}$. By Corollary~\ref{cor:ConvForm5873s} we then clearly have $s_{k,l}=(\Delta_1\Delta_2\overline{S})_{k-1,l-1}$ for $k,l=1,\ldots,n$.

On the other hand, for the sequence $f_{k,l}=\overline{F}_{k:n,l:n}(t_1,t_2)$, by Corollary~\ref{cor:sf6} we observe that
\begin{equation}\label{eq:wr7rw}
(\Delta_1\Delta_2f)_{k,l}=\sum_{\textstyle{\mathbf{u}\in\{0,1\}^n\atop |\mathbf{u}|=n-k}}\sum_{\textstyle{\mathbf{v}\in\{0,1\}^n\atop |\mathbf{v}|=n-l}}\Pr({\mathbf{X}(t_1)\choose \mathbf{X}(t_2)}={\bfu\choose\bfv}).
\end{equation}
Now, observing that $\overline{S}_{k,l}=0$ whenever $k=n$ or $l=n$ and defining $\overline{F}_{k:n,l:n}(t_1,t_2)=0$ whenever $k=0$ or $l=0$, by two applications of (\ref{eq:SumPP}) we can rewrite the right-hand side of (\ref{eq:sfd76}) as
$$
\sum_{k=1}^n\sum_{l=1}^n (\Delta_1\Delta_2\overline{S})_{k-1,l-1} f_{k,l} ~=~ \sum_{k=0}^{n-1}\sum_{l=0}^{n-1} \overline{S}_{k,l}(\Delta_1\Delta_2f)_{k,l}.
$$
Using Eqs.~\eqref{eq:main3cor} and \eqref{eq:wr7rw}, this double sum immediately becomes
\begin{multline*}
\sum_{k=0}^{n-1}\sum_{l=0}^{n-1} \bigg(\sum_{\textstyle{\bfx\in\{0,1\}^n\atop |\bfx|=n-k}}\sum_{\textstyle{\bfy\in\{0,1\}^n\atop |\bfy|=n-l}}q_0(\bfx,\bfy)\phi_1(\bfx)\phi_2(\bfy)\bigg)\\
\times \bigg(\sum_{\textstyle{\mathbf{u}\in\{0,1\}^n\atop |\mathbf{u}|=n-k}}\sum_{\textstyle{\mathbf{v}\in\{0,1\}^n\atop |\mathbf{v}|=n-l}}\Pr({\mathbf{X}(t_1)\choose \mathbf{X}(t_2)}={\bfu\choose\bfv})\bigg)
\end{multline*}
or equivalently (observing that $\phi_1(\bfx)\phi_2(\bfy)=0$ whenever $|\bfx|=|\bfy|=0$),
\begin{multline} \sum_{\bfx\in\{0,1\}^n}\sum_{\bfy\in\{0,1\}^n}q_0(\bfx,\bfy)\phi_1(\bfx)\phi_2(\bfy)\\
\times \sum_{\textstyle{\mathbf{u}\in\{0,1\}^n\atop |\mathbf{u}|=|\bfx|}}\sum_{\textstyle{\mathbf{v}\in\{0,1\}^n\atop |\mathbf{v}|=|\bfy|}}\Pr({\mathbf{X}(t_1)\choose \mathbf{X}(t_2)}={\bfu\choose\bfv}).\label{eq:11122}
\end{multline}
If $F$ satisfies condition \eqref{eq:cond.12} for any nonzero $\bfx,\bfy\in\{0,1\}^n$, then it follows from Lemma~\ref{lemma:et5h},
Remark \ref{rem: abc01}, and Proposition \ref{prop:f5fd} that \eqref{eq:sfd76} holds.

Let us now prove the converse part. Denote by $\Psi_n$ the vector space of functions $\phi\colon\{0,1\}^n\to\R$ satisfying $\phi(\mathbf{0})=0$. Fix $t_1,t_2\geqslant 0$ and define two bilinear operators $B_1$ and $B_2$ on $\Psi_n$ by letting $B_1(\phi_1,\phi_2)$ be equal to the right-hand side of \eqref{eq:f5fd00} and $B_2(\phi_1,\phi_2)$ be given by \eqref{eq:11122} for any $\phi_1,\phi_2\in\Psi_n$.
Then, if \eqref{eq:sfd76} holds for any coherent systems $\mathcal{S}_1$ and $\mathcal{S}_2$, the restrictions of $B_1$ and $B_2$ to the set of coherent structure functions are equal. Since $B_1$ and $B_2$ are bilinear operators and since, for $n\geqslant 3$, the coherent structure functions span the vector space $\Psi_n$ (see \cite[Appendix]{MarMatWal11}), the operators $B_1$ and $B_2$ are equal. In particular, for any nonzero $\bfx,\bfy\in\{0,1\}^n$ we have $B_1(\delta_{\bfx},\delta_\bfy)=B_2(\delta_{\bfx},\delta_\bfy)$, where for every $\mathbf{t}\neq\mathbf{0}$
the function $\delta_{\mathbf{t}}$ is defined by $\delta_{\mathbf{t}}(\bfz)=1$, if $\bfz=\mathbf{t}$, and $\delta_{\mathbf{t}}(\bfz)=0$, otherwise. We then obtain \eqref{eq:as9f7} for any nonzero $\bfx,\bfy\in\{0,1\}^n$.
\end{proof}

\begin{remark}
\begin{enumerate}
\item[(a)] We would like to stress again on the fact that even though the joint structure signature is involved in Eq.~\eqref{eq:sfd76} it has to be regarded as a combinatorial object (see Remark~\ref{rem:as5f}). Therefore in Proposition~\ref{prop:mainDEC9} we do not need to assume that the component lifetimes are i.i.d.\ or exchangeable.
\item[(b)] Moreover, it is well known that the decomposition \eqref{eq:837gf465} still holds in the case of exchangeable component lifetimes with possible ties. Considering the joint structure signature as a combinatorial object allows us to use it when $F$ has possible ties. It is then easy to check that Proposition \ref{prop:mainDEC9} still holds in this case.
\item[(c)] The proof of Proposition~\ref{prop:mainDEC9} mainly uses combinatorial arguments and therefore constitutes
an alternative proof of the decomposition reported in Navarro et al.~\cite{NavSamBal13}.
\end{enumerate}
\end{remark}

The following example shows that representation \eqref{eq:sfd76} cannot be extended to arbitrary joint distribution functions $F$, even for particular systems.

\begin{example}
Consider the systems $\S_1=(C,\phi_1,F)$ and $\S_2=(C,\phi_2,F)$, where $\phi_1(x_1,x_2)=x_1$ and $\phi_2(x_1,x_2)=x_1\wedge x_2$ and assume that the component lifetimes $T_1$ and $T_2$ are independent and have exponential distributions with reliability functions $\overline{F}_1(t)=e^{-t}$ and $\overline{F}_2(t)=e^{-2t}$, respectively. Then the joint reliability function of these systems is given by
$$
\overline{F}_{\S_1,\S_2}(t_1,t_2) ~=~
\begin{cases}
e^{-3t_2}, & \mbox{if $t_1\leqslant t_2$},\\
e^{-t_1-2t_2}, & \mbox{if $t_2\leqslant t_1$}.
\end{cases}
$$
We also have
\begin{eqnarray*}
\overline{F}_{1:2,1:2}(t_1,t_2) &=&
\begin{cases}
e^{-3t_2}, & \mbox{if $t_1\leqslant t_2$},\\
e^{-3t_1}, & \mbox{if $t_2\leqslant t_1$},
\end{cases}
\\
\overline{F}_{1:2,2:2}(t_1,t_2) &=&
\begin{cases}
e^{-2t_1-t_2}+e^{-t_1-2t_2}-e^{-3t_2}, & \mbox{if $t_1\leqslant t_2$},\\
e^{-3t_1}, & \mbox{if $t_2\leqslant t_1$},
\end{cases}
\\
\overline{F}_{2:2,1:2}(t_1,t_2) &=&
\begin{cases}
e^{-3t_2}, & \mbox{if $t_1\leqslant t_2$},\\
e^{-2t_1-t_2}+e^{-t_1-2t_2}-e^{-3t_1}, & \mbox{if $t_2\leqslant t_1$},
\end{cases}
\\
\overline{F}_{2:2,2:2}(t_1,t_2) &=&
\begin{cases}
e^{-t_2}+e^{-2t_2}-e^{-3t_2}, & \mbox{if $t_1\leqslant t_2$},\\
e^{-t_1}+e^{-2t_1}-e^{-3t_1}, & \mbox{if $t_2\leqslant t_1$}.
\end{cases}
\end{eqnarray*}
Since the exponential functions involved in the expressions above are linearly independent,
one can easily show that there cannot exist real numbers $w_{1,1}$, $w_{1,2}$, $w_{2,1}$, and $w_{2,2}$ such that
$$
\overline{F}_{\S_1,\S_2} ~=~ w_{1,1}\overline{F}_{1:2,1:2}+w_{1,2}\overline{F}_{1:2,2:2}+w_{2,1}\overline{F}_{2:2,1:2}+
w_{2,2}\overline{F}_{2:2,2:2}.
$$
In particular, representation \eqref{eq:sfd76} does not hold for these systems.
\end{example}

\begin{remark}
When \eqref{eq:sfd76} does not hold, proceeding as in \cite{NavSpiBal10} we can consider two pairs of bivariate
\emph{mixed systems} (i.e., mixtures of pairs of semicoherent systems with component lifetimes $T_1,\dots,T_n$)
associated with $(\S_1,\S_2)$. The first pair $(\widetilde{\mathcal{S}}_1,\widetilde{\mathcal{S}}_2)$ is
called \textit{bivariate projected systems} and defined to be equal to $(\S_{k:n},\S_{l:n})$ with probability $p_{k,l}$,
where $\S_{k:n}$ is the $k$-out-of-$n$ system. The second pair $(\overline{\mathcal{S}}_1,\overline{\mathcal{S}}_2)$ is
called \textit{bivariate average (or symmetrized) systems} and defined to be equal to $(\S_{k:n},\S_{l:n})$ with probability
$s_{k,l}$. These latter systems $(\overline{\mathcal{S}}_1,\overline{\mathcal{S}}_2)$ are equivalent (equal in law) to the
parent systems $(\S_1,\S_2)$ when all the components are placed randomly in the structures. Equivalently, they are the systems
obtained with the structure functions $\phi_1$ and $\phi_2$ from the exchangeable random vector $(T_1^{\text{EXC}},\dots,T_n^{\text{EXC}})$ defined to be equal to
$(T_{\sigma(1)},\dots, T_{\sigma(n)})$ with probability $1/n!$ for any permutation $\sigma$. Both pairs $(\widetilde{\mathcal{S}}_1,\widetilde{\mathcal{S}}_2)$ and $(\overline{\mathcal{S}}_1,\overline{\mathcal{S}}_2)$ can be seen as approximations (in law) of the pair $(\mathcal{S}_1,\mathcal{S}_2)$.
\end{remark}

It was observed in \cite[Remark~2]{MarMatWal11} that in general the exchangeability of the component states does not imply the exchangeability of the component lifetimes. We now show that Eq.~\eqref{eq:cond.12} provides an intermediate condition between these exchangeabilities. More precisely, we have the following result.

\begin{proposition}\label{prop123123}
The following assertions hold.
\begin{enumerate}
\item[(a)] If the component lifetimes $T_1,\ldots,T_n$ are exchangeable, then condition \eqref{eq:cond.12} holds for any $t_1,t_2\geqslant 0$ and any $\bfx,\bfy\in\{0,1\}^n$.

\item[(b)] If condition \eqref{eq:cond.12} holds for some $0\leqslant t_1<t_2$ and any nonzero $\bfx,\bfy\in\{0,1\}^n$, then the component states $X_1(t_2)$, $\ldots$, $X_n(t_2)$ at time $t_2$ are exchangeable.
\end{enumerate}
\end{proposition}

\begin{proof}
(a) Suppose without loss of generality that $t_1\leqslant t_2$. Then the event $(\mathbf{X}(t_1)=\bfx~\text{ and }~\mathbf{X}(t_2)=\bfy)$ is empty if $\bfy\nleqslant\bfx$. Otherwise, it is equal to
$$
\bigg(\bigcap_{x_i=0}(T_i\leqslant t_1)\bigg)\cap\bigg(\bigcap_{\textstyle{x_i=1\atop y_i=0}}(t_1<T_i\leqslant t_2)\bigg)\cap\bigg(\bigcap_{y_i=1}(t_2<T_i)\bigg).
$$
Since $T_1,\ldots,T_n$ are exchangeable, we see that the probability of this event remains unchanged if we replace $T_1,\ldots,T_n$ with $T_{\sigma(1)},\ldots,T_{\sigma(n)}$. This shows that condition \eqref{eq:cond.12} holds for any $\bfx,\bfy\in\{0,1\}^n$ and any permutation $\sigma\in \Sigma_n$.

(b) For every $\sigma\in \Sigma_n$, the equality $\Pr(\mathbf{X}(t_2)=\bfy)=\Pr(\mathbf{X}(t_2)=\sigma(\bfy))$ is trivial if $\bfy=\mathbf{0}$. Otherwise, since $t_1<t_2$, we obtain
\begin{eqnarray*}
\Pr(\mathbf{X}(t_2)=\bfy) &=& \sum_{\textstyle{\bfx\in\{0,1\}^n\atop\bfx\geqslant\bfy}}\Pr({\mathbf{X}(t_1)\choose \mathbf{X}(t_2)}={\bfx\choose\bfy})\\
&=& \sum_{\textstyle{\bfx\in\{0,1\}^n\atop\bfx\geqslant\bfy}}\Pr({\mathbf{X}(t_1)\choose \mathbf{X}(t_2)}={\sigma(\bfx)\choose\sigma(\bfy)}).
\end{eqnarray*}
Using the fact that $\bfx\geqslant\bfy$ if and only if $\sigma(\bfx)\geqslant\sigma(\bfy)$, we then have
\begin{eqnarray*}
\Pr(\mathbf{X}(t_2)=\bfy) &=& \sum_{\textstyle{\bfx\in\{0,1\}^n\atop\sigma(\bfx)\geqslant\sigma(\bfy)}}\Pr({\mathbf{X}(t_1)\choose \mathbf{X}(t_2)}={\sigma(\bfx)\choose\sigma(\bfy)})\\
&=& \Pr(\mathbf{X}(t_2)=\sigma(\bfy)).
\end{eqnarray*}
This completes the proof.
\end{proof}

In most of the previous results, we have considered fixed times $t_1,t_2$. If we are interested in condition \eqref{eq:cond.12} for any times $t_1,t_2$, then we have the following result.

\begin{proposition}
Condition \eqref{eq:cond.12} holds for any $t_1,t_2>0$ and any nonzero $\bfx,\bfy\in\{0,1\}^n$ if and only if it holds for any $t_1,t_2>0$ and any $\bfx,\bfy\in\{0,1\}^n$. Moreover in this case, the component states $X_1(t)$, $\ldots$, $X_n(t)$ are exchangeable at any time $t>0$.
\end{proposition}

\begin{proof}
Fix a time $t>0$ and choose $t_1$ such that $0<t_1<t$. Since condition \eqref{eq:cond.12} holds for $t_1$ and $t$ and any nonzero $\bfx,\bfy\in\{0,1\}^n$, the component states $X_1(t)$, $\ldots$, $X_n(t)$ are exchangeable by Proposition~\ref{prop123123}.

Now, assume without loss of generality that $0<t_1\leqslant t_2$ and compute, for $\bfx\neq\mathbf{0}$,
\begin{multline*}
\Pr({\mathbf{X}(t_1)\choose \mathbf{X}(t_2)}={\bfx\choose\mathbf{0}})\\
~=~ \Pr(\mathbf{X}(t_1)=\bfx)-\sum_{\textstyle{\bfy\in\{0,1\}^n\setminus\{\mathbf{0}\}\atop\bfy\leqslant\bfx}}\Pr({\mathbf{X}(t_1)\choose \mathbf{X}(t_2)}={\bfx\choose\bfy}).
\end{multline*}
Due to the exchangeability of the component states at times $t_1$ and to the hypothesis, this expression remains unchanged if we replace $\bfx$ with $\sigma(\bfx)$ for any $\sigma\in \Sigma_n$.
\end{proof}

We end this section by stating the generalization of Proposition~\ref{prop:T2} to the case of two systems. The proof is a straightforward adaptation of the proof of Proposition~\ref{prop:mainDEC9}. However, just as in the case of a single system (see \cite[Theorem~6]{MarMatWal11}), the corresponding condition cannot be easily interpreted.

\begin{proposition}\label{prop:mainDEC9z}
Let $t_1,t_2\geqslant 0$ be fixed. If the joint c.d.f.\ $F$ satisfies the condition
\begin{multline}
\Pr({\mathbf{X}(t_1)\choose \mathbf{X}(t_2)}={\mathbf{x}\choose\mathbf{y}})\\
~=~ q(\bfx,\bfy)\sum_{\textstyle{\mathbf{u}\in\{0,1\}^n\atop |\mathbf{u}|=|\bfx|}}\sum_{\textstyle{\mathbf{v}\in\{0,1\}^n\atop |\mathbf{v}|=|\bfy|}}\Pr({\mathbf{X}(t_1)\choose \mathbf{X}(t_2)}={\mathbf{u}\choose\mathbf{v}})\label{eq:weird}
\end{multline}
for any nonzero
$\bfx,\bfy\in\{0,1\}^n$, then we have
\begin{equation}\label{eq:111222333444}
\overline{F}_{\mathcal{S}_1,\mathcal{S}_2}(t_1,t_2) ~=~ \sum_{k=1}^n{\,}\sum_{l=1}^n p_{k,l}{\,}\overline{F}_{k:n,l:n}(t_1,t_2),
\end{equation}
for any semicoherent systems $\mathcal{S}_1,\mathcal{S}_2$.
Conversely, if $n\geqslant 3$ and if \eqref{eq:111222333444} holds for any coherent systems $\mathcal{S}_1,\mathcal{S}_2$ (at times $t_1,t_2$), then
the joint c.d.f.\ $F$ satisfies condition \eqref{eq:weird} for any nonzero $\bfx,\bfy\in\{0,1\}^n$.
\end{proposition}

\section{Applications to multistate systems}

We now apply the results obtained in Sections 2 and 3 to the investigation of the signature and reliability of multistate systems made up of two-state components.

Let $n\geqslant 1$ and $m\geqslant 1$ be fixed integers. Consider an $(m+1)$-state system $\calS=(C,\phi,F)$, where $C=[n]$ represents $n$ nonrepairable two-state components, $\phi\colon\{0,1\}^n\to\{0,\ldots,m\}$ is the structure function that expresses the state of the system in terms of the states of its components, and as usual $F$ denotes the joint c.d.f.\ of the component lifetimes $T_1,\ldots,T_n$.

Note that the structure function $\phi$ has now $m+1$ possible values that represent the possible states of the system, going from the lowest value ``$0$'' representing the \emph{complete failure state} to the highest value ``$m$'' representing the \emph{perfection state}. Such multistate systems made up of two-state components have been investigated and applied to network reliability for instance in Levitin et al. \cite{LevGerShp11}, Gertsbakh et al.~\cite{GerSchSpi12}, and Da and Hu \cite{DaHu13}.

We assume that the system $\calS$ (or equivalently, its structure function $\phi$) is \emph{semicoherent}, i.e., it is nondecreasing in each variable and satisfies the boundary conditions $\phi(\mathbf{0})=0$ and $\phi(\mathbf{1})=m$. Here again, we also assume that the c.d.f.\ $F$ has no ties.

Recall that $X_j(t)=\Ind(T_j>t)$ denotes the state variable of component $j\in C$ at time $t\geqslant 0$, and $\mathbf{X}(t)$ denotes the $n$-tuple $(X_1(t),\ldots,X_n(t))$. By definition, $X_{\calS}(t)=\phi(\mathbf{X}(t))$ is then precisely the state of the system at time $t$.

Since the system has $m+1$ possible states, its ``lifetime'' can be described by $m$ random variables that represent the times at which the state of the system strictly decreases. We introduce these random variables, denoted $T_{\calS}^{\geqslant 1},\ldots,T_{\calS}^{\geqslant m}$, by means of the conditions
$$
T_{\calS}^{\geqslant k}~>~t \quad \Leftrightarrow \quad \phi(\mathbf{X}(t))~\geqslant ~k{\,},\qquad k=1,\ldots,m.
$$
Thus defined, $T_{\calS}^{\geqslant k}$ is the time at which the system ceases to be at a state $\geqslant k$ and deteriorates to a state $<k$. It is then clear that these variables satisfy the inequalities $T_{\calS}^{\geqslant 1}\geqslant\cdots\geqslant T_{\calS}^{\geqslant m}$.

In this setting it is also useful and natural to introduce the reliability function (called reliability of the system at states $\geqslant k$)
\begin{equation}\label{eq:pRelF2}
\overline{F}_{\calS}^{\geqslant k}(t) ~=~ \Pr(T_{\calS}^{\geqslant k}>t),\qquad t\geqslant 0,
\end{equation}
for $k=1,\ldots,m$ as well as the (overall) reliability function
\begin{multline}
\overline{F}_{\calS}(t_1,\ldots,t_m)\\
~=~ \Pr(T_{\calS}^{\geqslant 1}>t_1~,\ldots,~T_{\calS}^{\geqslant m}>t_m),\quad t_1,\ldots,t_m\geqslant 0.\label{eq:oRelF3}
\end{multline}
Due to the inequalities $T_{\calS}^{\geqslant 1}\geqslant\cdots\geqslant T_{\calS}^{\geqslant m}$, it is clear that we also have $\overline{F}_{\calS}^{\geqslant 1}\geqslant\cdots\geqslant \overline{F}_{\calS}^{\geqslant m}$.

For simplicity let us now consider the special case where $m=2$. In this setting, the {\em probability signature} of a $3$-state system $\calS=(C,\phi,F)$ is the square matrix $\boldsymbol{\mathfrak{p}}$ of order $n$ whose $(k,l)$-entry is the probability
\begin{equation}\label{eq:sign3}
\mathfrak{p}_{k,l} ~=~ \Pr(T_{\calS}^{\geqslant 1}=T_{k:n}~,~T_{\calS}^{\geqslant 2}=T_{l:n}),\quad k,l=1,\ldots,n,
\end{equation}
where $T_{1:n}\leqslant\cdots\leqslant T_{n:n}$ are the order statistics of the component lifetimes $T_1,\ldots,T_n$. Note that if $\mathfrak{p}_{k,l}> 0$, then necessarily $k\geqslant l$.

Also, the {\em tail probability signature} of a $3$-state system $\calS=(C,\phi,F)$ is the square matrix $\overline{\boldsymbol{\mathfrak{P}}}$ of order $n+1$ whose $(k,l)$-entry is the probability
\begin{equation}\label{eq:tsign3}
\overline{\mathfrak{P}}_{k,l} ~=~ \Pr(T_{\calS}^{\geqslant 1}>T_{k:n}~,~T_{\calS}^{\geqslant 2}>T_{l:n}),\quad k,l=0,\ldots,n.
\end{equation}
We remark that the concepts of probability signature and tail probability signature were already introduced by Gertsbakh et al.~\cite{GerSchSpi12} and later by Da and Hu \cite{DaHu13} as the ``bivariate signature'' and ``bivariate tail signature'', respectively, of a $3$-state system in the special case where the component lifetimes are i.i.d.\ and continuous (see also \cite{LevGerShp11} for an earlier work). In the general non-i.i.d.\ setting, we naturally call these concepts ``probability signature'' and ``tail probability signature'' in full analogy with the case of two-state systems (see Section 2). Clearly, these definitions can be immediately extended to the more general case of $(m+1)$-state systems.

In Sections 5 and 6 we will show how the results obtained in Sections 2 and 3 on the investigation of the joint probability signature and joint reliability of two or more systems can be usefully and very easily applied to the computation of the probability signature and reliability function of a multistate system made up of two-state components. Our results actually simplify and generalize to the non-i.i.d.\ case several results obtained by Gertsbakh et al.~\cite{GerSchSpi12} and Da and Hu \cite{DaHu13} on these topics. Moreover, we do not require the restrictive ``regularity condition'' (which states that the system must be coherent and that the state of the system may not suddenly decrease by more than one unit; see Remark~\ref{rem:reg5} below).

The key feature of our approach is given by the Decomposition Principle given in Section 5, which states that such a multistate system can always be additively decomposed into several two-state systems. Although this feature was already observed in the eighties by Block and Savits \cite[Theorem 2.8]{BloSav82}, it seems that it has never been exploited to investigate the probability signature of a multistate system.



\section{Decomposition principle and some of its consequences}

By defining the state variable $X_{\calS}^{\geqslant k}(t)= \Ind(T_{\calS}^{\geqslant k}>t)$ for $k=1,\ldots,m$, we can immediately express the state $\phi(\mathbf{X}(t))$ of the system at time $t$ by
$$
X_{\calS}(t) ~=~ \sum_{k=1}^m X_{\calS}^{\geqslant k}(t).
$$
This equation shows that it is natural to express the structure function $\phi$ as a sum of $m$ Boolean (i.e., $\{0,1\}$-valued) structure functions. Actually, this idea was already suggested in another form by Block and Savits \cite[Theorem 2.8]{BloSav82}.

\begin{proposition}[Boolean decomposition]\label{prop:DecBloSav}
Any semicoherent structure function $\phi\colon\{0,1\}^n\to\{0,\ldots,m\}$ decomposes in a unique way as a sum
\begin{equation}\label{eq:DecBloSav3}
\phi ~=~ \sum_{k=1}^m\phi_{\ang{k}},
\end{equation}
where $\phi_{\ang{k}}\colon\{0,1\}^n\to\{0,1\}$ $(k=1,\ldots,m)$ are semicoherent structure functions such that $\phi_{\ang{1}}\geqslant\cdots\geqslant\phi_{\ang{m}}$ (i.e., $\phi_{\ang{1}}(\bfx)\geqslant\cdots\geqslant\phi_{\ang{m}}(\bfx)$ for all $\bfx\in\{0,1\}^n$).
\end{proposition}

\begin{proof}
(Uniqueness) Assume that we have functions $\phi_{\ang{1}}\geqslant\cdots\geqslant\phi_{\ang{m}}$ for which \eqref{eq:DecBloSav3} holds. For any $j\in\{1,\ldots,m\}$ and any $\bfx\in\{0,1\}^n$, if $\phi_{\ang{j}}(\bfx)=1$, then $\phi_{\ang{i}}(\bfx)=1$ for every $i\leqslant j$. By \eqref{eq:DecBloSav3} we then have $\phi(\bfx)\geqslant j$. Similarly, if $\phi_{\ang{j}}(\bfx)=0$, then $\phi(\bfx)< j$. Therefore, for every $k\in\{0,\ldots,m\}$ we must have
\begin{equation}\label{eq:ConstrPk8}
\phi_{\ang{k}}(\bfx) ~=~ 1\quad\Leftrightarrow\quad\phi(\bfx) ~\geqslant ~ k.
\end{equation}

\noindent (Existence) It is straightforward to see that the decomposition holds if we define each $\phi_{\ang{k}}\colon\{0,1\}^n\to\{0,1\}$ by the condition \eqref{eq:ConstrPk8}. Each of these functions is clearly semicoherent.
\end{proof}

\begin{remark}
We observe that the equivalence \eqref{eq:ConstrPk8} strongly resembles condition (4.1) in Natvig \cite{Nat82} (see also \cite[Def.~2.6, p.~16]{Nat11}). Actually, taking into consideration Section 6 of \cite{Nat82}, the equivalence \eqref{eq:ConstrPk8} shows that every multistate system made up of two-state components is a (modified) multistate coherent system of type 2 in Natvig's sense.
\end{remark}


The Boolean decomposition stated in Proposition~\ref{prop:DecBloSav} shows that any semicoherent $(m+1)$-state system always gives rise to $m$ semicoherent two-state systems constructed on the same set of components. This observation naturally leads to the following definition.

\begin{definition}\label{def:DecSyst4}
Given a semicoherent $(m+1)$-state system $\calS=(C,\phi,F)$, with Boolean decomposition $\phi=\sum_k\phi_{\ang{k}}$, we define the semicoherent systems $\calS_k=(C,\phi_{\ang{k}},F)$ for $k=1,\ldots,m$.
\end{definition}

From Definition~\ref{def:DecSyst4} we can immediately derive the following important theorem. For every $k\in\{1,\ldots,m\}$, let $T_{\calS_k}$ denote the random lifetime of $\calS_k$.

\begin{theorem}\label{thm:DecSyst4}
We have $T_{\calS}^{\geqslant k}=T_{S_k}$ for $k=1,\ldots,m$.
\end{theorem}

\begin{proof}
For any $k\in\{1,\ldots,m\}$ and any $t\geqslant 0$, by using \eqref{eq:ConstrPk8} we obtain the equivalence
\begin{eqnarray*}
T_{\calS}^{\geqslant k} ~>~t &\Leftrightarrow & \phi(\mathbf{X}(t))~\geqslant ~ k\quad\Leftrightarrow \quad \phi_{\ang{k}}(\mathbf{X}(t))~=~1\\
& \Leftrightarrow & T_{S_k}~>~t,
\end{eqnarray*}
which proves the theorem.
\end{proof}

As we will see in the rest of this paper, Definition~\ref{def:DecSyst4} and Theorem~\ref{thm:DecSyst4} have important consequences. They actually reveal the following decomposition principle, which shows that considering a multistate system reduces to considering simultaneously several two-state systems.
\begin{DP}
Any semicoherent $(m+1)$-state system $\calS$ made up of two-state components can be additively decomposed into $m$ semicoherent two-state systems $\calS_1,\ldots,\calS_m$ constructed on the same set of components, with the property that for any $k\in\{1,\ldots,m\}$ the lifetime of $\calS_k$ is the time at which $\calS$ deteriorates from a state $\geqslant k$ to a state $<k$.
\end{DP}

In Sections 2 and 3 the joint signature and the joint reliability of two or more systems have been investigated in detail and several explicit formulas for these concepts have been presented. As we will now see, the Decomposition Principle will enable us to use these results to efficiently investigate both the signature and reliability of multistate systems made up of two-state components. For simplicity, we will restrict ourselves to the case $m=2$.

Considering the systems $\calS_1$ and $\calS_2$ introduced in Definition~\ref{def:DecSyst4} and using Theorem~\ref{thm:DecSyst4}, we immediately see that the probability signature and the tail probability signature of any $3$-state system (as defined in \eqref{eq:sign3} and \eqref{eq:tsign3}) are precisely the joint probability signature and the joint tail probability signature, respectively, of the systems $\calS_1$ and $\calS_2$. Thus, we have established the following proposition.

\begin{proposition}\label{prop:pp2}
We have $\boldsymbol{\mathfrak{p}}=\mathbf{p}$ and $\overline{\boldsymbol{\mathfrak{P}}}=\overline{\mathbf{P}}$.
\end{proposition}

Proposition~\ref{prop:pp2} not only provides an interesting connection with the results presented in Sections 2 and 3 but also provides an explicit expression (see Theorem~\ref{thm:main32} below) of the tail probability signature of a multistate system in a way that simplifies and generalizes to the non-i.i.d.\ case the results recently obtained by Gertsbakh et al.~\cite{GerSchSpi12} and Da and Hu \cite{DaHu13} on these topics. From now on, we will then use the symbols $\mathbf{p}$ and $\overline{\mathbf{P}}$ in place of $\boldsymbol{\mathfrak{p}}$ and $\overline{\boldsymbol{\mathfrak{P}}}$, respectively.

We first observe that Proposition~\ref{prop:ConvForm5873} immediately provides the conversion formulas between the matrices $\mathbf{p}$ and $\overline{\mathbf{P}}$ (already obtained in the i.i.d.\ case in \cite{GerSchSpi12}).

In this setting of multistate systems we have already observed the following condition on the matrix $\mathbf{p}$: if $p_{k,l}> 0$, then $k\geqslant l$. From this observation we easily derive the following condition on the matrix $\overline{\mathbf{P}}$ (obtained in another form and in the i.i.d.\ case by Da and Hu~\cite[p.~148]{DaHu13}).

\begin{proposition}\label{prop:kl3}
If $k\leqslant l$, then we have $\overline{P}_{k,l}=\overline{P}_{l,l}$. That is, each value $\overline{P}_{k,l}$ located in the column $l$ and above the diagonal of $\overline{\mathbf{P}}$ is precisely given by the corresponding value $\overline{P}_{l,l}$ on the diagonal.
\end{proposition}

\begin{proof}
Since $p_{i,j}=0$ if $i<j$, by Eq.~\eqref{eq:CoFo1} for $k\leqslant l$ we have $\overline{P}_{k,l} = \sum_{i\geqslant j>l}p_{i,j} = \overline{P}_{l,l}$.
\end{proof}

Now, from Theorem~\ref{thm:main3} we immediately derive the following theorem, which provides an explicit expression of $\overline{P}_{k,l}$ in which the structure function $\phi$ is represented through the functions $\phi_{\ang{1}}$ and $\phi_{\ang{2}}$ and the distribution $F$ of the component lifetimes is encoded in the joint relative quality function.

\begin{theorem}\label{thm:main32}
For every $k,l\in\{0,\ldots,n\}$ we have
$$
\overline{P}_{k,l} ~=~ \sum_{\textstyle{A\subseteq C\atop |A|=n-k}}{\,}\sum_{\textstyle{B\subseteq C\atop |B|=n-l}}q(A,B){\,}\phi_{\ang{1}}(A)\phi_{\ang{2}}(B).
$$
\end{theorem}

\begin{remark}\label{rem:skl1}
Recall that when the component lifetimes $T_1,\ldots,T_n$ are exchangeable and continuous (and in particular when they are i.i.d.\ and continuous), then $q(A,B)$ reduces to the value $q_0(A,B)$ as given in Definition~\ref{de:q0}. In this case the signature no longer depends on the distribution of the component lifetimes and can be seen as a purely combinatorial object that depends only on the structure function. This object is called \emph{structure signature} and the corresponding tail probability signature is called \emph{tail structure signature}. In accordance with the notation used in the previous sections, we then use the symbols $\mathbf{s}$, $\overline{\mathbf{S}}$, $s_{k,l}$, and $\overline{S}_{k,l}$ instead of $\mathbf{p}$, $\overline{\mathbf{P}}$, $p_{k,l}$, and $\overline{P}_{k,l}$, respectively.
\end{remark}

Let us now investigate the reliability functions $\overline{F}_{\calS}^{\geqslant 1}(t)$,  $\overline{F}_{\calS}^{\geqslant 2}(t)$, and $\overline{F}_{\calS}(t_1,t_2)$ as defined in \eqref{eq:pRelF2} and \eqref{eq:oRelF3}. Considering again the systems $\calS_1$ and $\calS_2$ introduced in Definition~\ref{def:DecSyst4}, we can introduce the reliability functions $\overline{F}_{S_1}(t)=\Pr(T_{\calS_1}>t)$ and $\overline{F}_{S_2}(t)=\Pr(T_{\calS_2}>t)$ of systems $\calS_1$ and $\calS_1$, respectively, as well as the joint reliability function
$$
\overline{F}_{\calS_1,\calS_2}(t_1,t_2) ~=~ \Pr(T_{\calS_1}>t_1~,~T_{\calS_2}>t_2).
$$

By using Theorem~\ref{thm:DecSyst4} we immediately see that the reliability functions $\overline{F}_{\calS}^{\geqslant 1}$ and $\overline{F}_{\calS}^{\geqslant 2}$ coincide with the reliability functions of the systems $\calS_1$ and $\calS_2$, that is,
$$
\overline{F}_{\calS}^{\geqslant 1} ~=~\overline{F}_{S_1} {\,},\quad\overline{F}_{\calS}^{\geqslant 2}~=~\overline{F}_{S_2}{\,}.
$$
Similarly, the reliability function $\overline{F}_{\calS}$ coincides with the joint reliability function of the systems $\calS_1$ and $\calS_2$, that is
$$
\overline{F}_{\calS}~=~\overline{F}_{\calS_1,\calS_2}.
$$

From this observation we immediately obtain
\begin{equation}\label{eq:OvRel42}
\overline{F}_{\calS}(t_1,t_2) ~=~ \Pr\big(\phi_{\ang{1}}(\mathbf{X}(t_1))=1~,~\phi_{\ang{2}}(\mathbf{X}(t_2))=1\big),
\end{equation}
from which we also derive the formula (see Proposition~\ref{prop:f5fd})
\begin{multline*}
\overline{F}_{\calS}(t_1,t_2)\\
 ~= \sum_{\bfx,\bfy\in\{0,1\}^n}{\,}\phi_{\ang{1}}(\bfx){\,}\phi_{\ang{2}}(\bfy){\,}\Pr({\mathbf{X}(t_1)\choose \mathbf{X}(t_2)}={\mathbf{x}\choose\mathbf{y}}).
\end{multline*}

Also, from Proposition~\ref{prop:mainDEC9} we derive the following proposition, which provides a sufficient condition on the component states for the reliability of system $\calS$ to admit a signature-based decomposition. Once again, this result simplifies and generalizes to the non-i.i.d.\ setting a recent result obtained by Gertsbakh et al.~\cite[Theorem 1]{GerSchSpi12} and Da and Hu \cite[Theorem 7.2.3]{DaHu13}.


\begin{proposition}\label{prop:mainDEC92}
If, for any $t_1,t_2\geqslant 0$, the joint c.d.f.\ $F$ satisfies condition \eqref{eq:cond.12} for any nonzero $\bfx,\bfy\in\{0,1\}^n$ and any permutation $\sigma$ on $[n]$, then we have
\begin{equation}\label{eq:sfd762}
\overline{F}_{\calS}(t_1,t_2) ~=~ \sum_{k=1}^n{\,}\sum_{l=1}^n s_{k,l}{\,}\overline{F}_{k:n,l:n}(t_1,t_2),
\end{equation}
where the coefficients $s_{k,l}$ correspond to the structure signature as mentioned in Remark~\ref{rem:skl1}.
\end{proposition}

\begin{remark}
We would like to stress on the fact that even though the structure signature is involved in Eq.~\eqref{eq:sfd762} it has to be regarded as a combinatorial object as mentioned in Remark~\ref{rem:skl1}. Therefore we do not need to assume in Proposition~\ref{prop:mainDEC92} that the component lifetimes are i.i.d.\ or exchangeable.
\end{remark}

Before closing this section, let us discuss a little further the Boolean decomposition stated in Proposition~\ref{prop:DecBloSav} to help the reader get familiar with this important technique.

The following immediate proposition can be seen as a converse result to Proposition~\ref{prop:DecBloSav}.

\begin{proposition}\label{prop:we75}
Given $m$ semicoherent structure functions $\phi_k\colon\{0,1\}^n\to\{0,1\}$ $(k=1,\ldots,m)$, the function $\phi=\sum_{k=1}^m\phi_k$ is a $\{0,\ldots,m\}$-valued semicoherent structure function.
\end{proposition}

By definition, if the functions $\phi_k$ given in Proposition~\ref{prop:we75} are not ordered as $\phi_1\geqslant\cdots\geqslant\phi_m$, then they do not correspond to the Boolean decomposition of $\phi$ as defined in Proposition~\ref{prop:DecBloSav}.

\begin{example}\label{ex:notComp4}
Consider the functions $\phi_1\colon\{0,1\}^3\to\{0,1\}$ and $\phi_2\colon\{0,1\}^3\to\{0,1\}$ defined by
$$
\phi_1(x_1,x_2,x_3) ~=~ x_1\wedge(x_2\vee x_3)
$$
and
$$
\phi_2(x_1,x_2,x_3) ~=~ x_2\wedge x_3.
$$
We have $\phi_1(1,0,1)>\phi_2(1,0,1)$ and $\phi_1(0,1,1)<\phi_2(0,1,1)$. Hence $\phi_1$ and $\phi_2$ are not comparable (i.e., $\phi_1\nleqslant\phi_2$ and $\phi_2\nleqslant\phi_1$). Therefore, they do not correspond to the Boolean decomposition of $\phi=\phi_1+\phi_2$.
\end{example}

\begin{proposition}\label{prop:d8f76}
If $\phi=\phi_1+\phi_2$ for some functions $\phi_k\colon\{0,1\}^n\to\{0,1\}$ $(k=1,2)$, then  $\phi_{\ang{1}}=\phi_1\vee\phi_2$ and $\phi_{\ang{2}}=\phi_1\wedge\phi_2$. More generally, if $\phi=\phi_1+\cdots +\phi_m$ for some functions $\phi_k\colon\{0,1\}^n\to\{0,1\}$ $(k=1,\ldots,m)$, then
$$
\phi_{\ang{k}}=\bigvee_{\textstyle{A\subseteq\{1,\ldots,m\}\atop |A|=k}}~\bigwedge_{i\in A}\phi_i{\,}.
$$
In other terms, for every $\bfx\in\{0,1\}^m$, the value of $\phi_{\ang{k}}(\bfx)$ is the $k$th largest value from among $\phi_1(\bfx),\ldots,\phi_m(\bfx)$.
\end{proposition}

\begin{proof}
Let $\bfx\in\{0,1\}^n$. By \eqref{eq:ConstrPk8} we have $\phi_{\ang{k}}(\bfx)=1$ if and only if $\sum_{l=1}^m\phi_l(\bfx)\geqslant k$. This means that at least $k$ of the values $\phi_1(\bfx),\ldots,\phi_m(\bfx)$ are equal to $1$, or equivalently, the $k$th largest of these values is equal to $1$.
\end{proof}

\begin{example}[Example~\ref{ex:notComp4} continued]\label{ex:notComp42}
Consider the function $\phi=\phi_1+\phi_2$, where $\phi_1$ and $\phi_2$ are given in Example~\ref{ex:notComp4}. Then, by Proposition~\ref{prop:d8f76}, we have
\begin{eqnarray*}
\phi_{\ang{1}}(\bfx) &=& \phi_1(\bfx)\vee \phi_2(\bfx)\\
&=& (x_1\wedge x_2)\vee(x_2\wedge x_3)\vee(x_3\wedge x_1),\\
\phi_{\ang{2}}(\bfx) &=& \phi_1(\bfx)\wedge \phi_2(\bfx) ~=~ x_1\wedge x_2\wedge x_3{\,},
\end{eqnarray*}
and the Boolean decomposition of $\phi$ is $\phi=\phi_{\ang{1}}+\phi_{\ang{2}}$.
\end{example}

\begin{remark}
The functions $\phi_{\ang{1}},\ldots,\phi_{\ang{m}}$ can also be defined in terms of ``path sets'' in a very easy way. For any $k\in\{1,\ldots,m\}$ we say that a subset $P$ of components is a \emph{$k$-path set} of the system $\calS=(C,\phi,F)$ if $\phi(P)\geqslant k$. We say that a $k$-path set is \emph{minimal} if it does not contain any other $k$-path set. Using \eqref{eq:ConstrPk8} we immediately see that $P$ is a (minimal) $k$-path set of $\calS$ if and only if it is a (minimal) path set of $\calS_k=(C,\phi_{\ang{k}},F)$. The system $\calS_k$ can therefore be defined from the $k$-path sets of $\calS$ for $k=1,\ldots,m$. Considering Example~\ref{ex:notComp4} for instance, we obtain the minimal $1$-path sets $\{1,2\}$, $\{1,3\}$, $\{2,3\}$, and the minimal $2$-path set $\{1,2,3\}$. This leads to the structure functions $\phi_{\ang{1}}$ and $\phi_{\ang{2}}$ as described in Example~\ref{ex:notComp42}.
\end{remark}

\begin{remark}\label{rem:reg5}
In the literature on multistate systems, it is often assumed that the system be coherent (i.e., in addition to be semicoherent, the structure function $\phi$ is nonconstant in each of its variables) and that the state of the system may not suddenly decrease by more than one unit. This restriction is called ``regularity condition'' in, e.g., \cite{DaHu13,GerShp11,GerSchSpi12}. In the present paper we do not need this restriction. On the one hand, being ``coherent'' is a condition that is not easy to handle and that is rarely required from a mathematical viewpoint. In some cases, this condition could even become a major obstacle. For instance, when decomposing a $\{0,\ldots,m\}$-valued structure function $\phi$ into a sum of $\{0,1\}$-valued structure functions $\phi_1,\ldots,\phi_m$ as in Example~\ref{ex:notComp4}, it would seem very restrictive to consider only coherent structure functions for $\phi_1,\ldots,\phi_m$. We therefore only ask the systems to be semicoherent. On the other hand, we allow ourselves that for any $k\in\{1,\ldots,m-1\}$ the event $T_{\calS}^{\geqslant k}=T_{\calS}^{\geqslant k+1}$ has a nonzero probability. Forbidding this condition would be very unnatural. For instance, consider the simple $3$-state system on $C=\{1,2,3\}$ whose structure function is given by $\phi=\phi_{\ang{1}}+\phi_{\ang{2}}$, with
$$
\phi_{\ang{1}}(x_1,x_2,x_3) ~=~ x_1\wedge (x_2\vee x_3)
$$
and
$$
\phi_{\ang{2}}(x_1,x_2,x_3) ~=~ x_1\wedge x_2\wedge x_3.
$$
In this case we have $\phi(1,1,1)=2$ and $\phi(0,1,1)=0$. Such a situation is not unnatural and hence should not be discarded.
\end{remark}

\section{Some applications}

We now consider some examples and applications of the theory developed in Section 5, which demonstrate the power of the Decomposition Principle when compared with previous methods.

\subsection{Some illustrative examples}

Let us consider a couple of special applications and examples to illustrate our results.

\begin{example}\label{ex:1c1}
Let us compute the reliability and structure signature of the system considered in Example~\ref{ex:notComp42}. By using \eqref{eq:f5sffx} and \eqref{eq:OvRel42} we immediately obtain
$$
\overline{F}_{\calS}(t_1,t_2) ~=~ \Pr\big(|\mathbf{X}(t_1)|\geqslant 2~,~ \mathbf{X}(t_2)=(1,1,1)\big){\,},\quad t_1,t_2\geqslant 0,
$$
and
$$
\overline{P}_{k,l} ~=~ \sum_{\textstyle{|A|\geqslant 2\atop |A|=3-k}}{\,}\sum_{\textstyle{|B|=3\atop |B|=3-l}}q(A,B){\,},\qquad k,l=0,\ldots,n.
$$

We know that $\overline{P}_{0,0}=1$ and we also see that $\overline{P}_{1,0} = \sum_{|A|=2} q(A,C) = q(C) = 1$. Using also \eqref{eq:CoFo2} we see that the matrices $\overline{\mathbf{P}}$ and $\mathbf{p}$ are then given by
$$
\overline{\mathbf{P}} ~=~
\begin{bmatrix}
1 & 0 & 0 & 0\\
1 & 0 & 0 & 0\\
0 & 0 & 0 & 0\\
0 & 0 & 0 & 0
\end{bmatrix}
\quad\mbox{and}\quad
\mathbf{p} ~=~
\begin{bmatrix}
0 & 0 & 0\\
1 & 0 & 0\\
0 & 0 & 0
\end{bmatrix}
.
$$
Note that the sparsity of these matrices comes from the fact that both $\phi_{\ang{1}}$ and $\phi_{\ang{2}}$ are symmetric functions (i.e., they represent $k$-out-of-$n$ systems).
\end{example}

\begin{example}
Let $\calS_1=(C,\phi_1,F)$ and $\calS_2=(C,\phi_2,F)$ be the systems considered in Example~\ref{ex:1}. Let also $\calS=(C,\phi,F)$ be the multistate system whose structure function is given by $\phi=\phi_1+\phi_2$.

Let us compute the probability signature of $\calS$. First, it is easy to see that the Boolean decomposition of $\phi$ is given by $\phi=\phi_{\ang{1}}+\phi_{\ang{2}}$, where
$$
\phi_{\ang{1}}(\bfx) ~=~ (x_1\wedge x_2)\vee(x_2\wedge x_4)\vee(x_3\wedge x_4)
$$
and
$$
\phi_{\ang{2}}(\bfx) ~=~ x_1\wedge x_2\wedge x_4.
$$
We can now compute the matrices $\overline{\mathbf{P}}$ and $\mathbf{p}$ by using \eqref{eq:CoFo2} and \eqref{eq:f5sffx}. Proceeding as in Example~\ref{ex:1c1} we easily obtain
$$
\overline{\mathbf{P}} ~=~
\begin{bmatrix}
1 & \overline{P}_{0,1} & 0 & 0 & 0\\
1 & \overline{P}_{1,1} & 0 & 0 & 0\\
\overline{P}_{2,0} & \overline{P}_{2,1} & 0 & 0 & 0\\
0 & 0 & 0 & 0 & 0\\
0 & 0 & 0 & 0 & 0
\end{bmatrix}
$$
and
$$
\mathbf{p} ~=~
\begin{bmatrix}
0 & 0 & 0 & 0\\
p_{2,1} & p_{2,2} & 0 & 0\\
p_{3,1} & p_{3,2} & 0 & 0\\
0 & 0 & 0 & 0
\end{bmatrix}
,
$$
where
\begin{eqnarray*}
\overline{P}_{2,0} &=& q(\{1,2\})+q(\{2,4\})+q(\{3,4\})\\
\overline{P}_{0,1} &=& \overline{P}_{1,1} ~=~ q(\{1,2,4\})\\
\overline{P}_{2,1} &=& q(\{1,2,4\},\{1,2\})+q(\{1,2,4\},\{2,4\})
\end{eqnarray*}
and
\begin{eqnarray*}
p_{2,1} &=& 1+\overline{P}_{2,1}-\overline{P}_{1,1}-\overline{P}_{2,0}\\
p_{2,2} &=& \overline{P}_{1,1}-\overline{P}_{2,1}\\
p_{3,1} &=& \overline{P}_{2,0}-\overline{P}_{2,1}\\
p_{3,2} &=& \overline{P}_{2,1}.
\end{eqnarray*}
\end{example}

\subsection{Probability that the system be at a given state}

It is noteworthy that from the Decomposition Principle and in particular from Eq.~\eqref{eq:ConstrPk8}, we immediately obtain the formula
\begin{multline*}
\Pr(\phi(\mathbf{X}(t))\geqslant k) ~=~ \Pr(\phi_{\ang{k}}(\mathbf{X}(t))=1) ~=~ \overline{F}_{\calS}^{\geqslant k}(t)\\
~=~ \overline{F}_{\calS_k}(t){\,},\qquad k\in\{1,\ldots,m\}.
\end{multline*}
That is, the probability that the system be at least at state $k$ at time $t\geqslant 0$ is simply given by the reliability $\overline{F}_{\calS_k}(t)$.

From this observation, we immediately derive the following explicit expression of the probability that the system be exactly at state $k$ at time $t\geqslant 0$. We remark that a similar result was established in the i.i.d.\ case in \cite[Theorem 2]{GerShp11}.

\begin{proposition}
We have
$$
\Pr(\phi(\mathbf{X}(t))=k) ~=~ \overline{F}_{\calS_k}(t)-\overline{F}_{\calS_{k+1}}(t),
$$
where by convention $\overline{F}_{\calS_{m+1}}(t)=0$ (i.e., $\phi_{\ang{m+1}}\equiv 0$).
\end{proposition}

\subsection{Dual systems}

Recall that the \emph{dual} of a structure function $\phi\colon\{0,1\}^n\to\{0,1\}$ is the structure function $\phi^D\colon\{0,1\}^n\to\{0,1\}$ defined from $\phi$ by
$$
\phi^D(\bfx) ~=~ 1-\phi(\mathbf{1}-\bfx){\,},\qquad\bfx\in\{0,1\}^n,
$$
where $\mathbf{1}$ denotes the $n$-tuple $(1,\ldots,1)$. Similarly, for multistate systems El-Neweihi et al.~\cite{NewProSet78} defined the \emph{dual} of the structure function $\phi\colon\{0,1\}^n\to\{0,\ldots,m\}$ as the structure function $\phi^D\colon\{0,1\}^n\to\{0,\ldots,m\}$ defined from $\phi$ by
$$
\phi^D(\bfx) ~=~ m-\phi(\mathbf{1}-\bfx){\,},\qquad\bfx\in\{0,1\}^n.
$$

The following result provides a very simple conversion formula between the Boolean decompositions of $\phi$ and $\phi^D$.

\begin{proposition}\label{eq:du43}
We have $(\phi^D)_{\ang{k}}=(\phi_{\ang{m-k+1}})^D$ for any $k\in\{1,\ldots,m\}$.
\end{proposition}

\begin{proof}
We have
\begin{eqnarray*}
(\phi^D)_{\ang{k}}(\bfx)~=~1 &\Leftrightarrow & \phi^D(\bfx)~\geqslant~k\\
&\Leftrightarrow & \phi(\mathbf{1}-\bfx)~\leqslant m-k\\
&\Leftrightarrow & \phi_{\ang{m-k+1}}(\mathbf{1}-\bfx) ~=~ 0\\
&\Leftrightarrow & (\phi_{\ang{m-k+1}})^D(\bfx)~=~1,
\end{eqnarray*}
which proves the result.
\end{proof}

Using Proposition~\ref{eq:du43} we can retrieve the explicit expression of the structural signature of the dual system as established by Da and Hu \cite[Theorem~7.2.4]{DaHu13}. The following proposition shows that this result actually still holds for the probability signature whenever the joint relative quality function is invariant under the operation of set complement. This is the case for instance when the numbers $q(A,B)$ are given by \eqref{eq:qExch5}.

Let us denote the probability signature and the tail probability signature of the dual system by $\mathbf{p}^D$ and $\overline{\mathbf{P}}{\,}{\!}^D$, respectively.

\begin{proposition}\label{prop:Signdua}
Assume that the joint relative quality function is invariant under the operation of set complement, that is
\begin{equation}\label{eq:passcompl}
q(A,B) ~=~ q(C\setminus A,C\setminus B),\qquad A,B\subseteq C.
\end{equation}
Then we have
\begin{equation}
p^D_{k,l} = p_{n-l+1,n-k+1}{\,},\quad k,l=1,\ldots,n\label{eq:pkld}
\end{equation}
and
\begin{equation}
\overline{P}{\,}{\!}^D_{k,l} = \overline{P}_{n-l,n-k}-\overline{P}_{n-l,0}-\overline{P}_{0,n-k}+\overline{P}_{0,0}{\,},\quad k,l=0,\ldots,n.\label{eq:Pkld}
\end{equation}
\end{proposition}

\begin{proof}
Let us prove the second formula. The first formula can then be derived from the conversion formula \eqref{eq:CoFo2}. By Proposition~\ref{prop:kl3} we can assume that $k\geqslant l$. By Theorem~\ref{thm:main32} we have
$$
\overline{P}{\,}{\!}^D_{k,l} ~=~ \sum_{\textstyle{B'\subseteq C\atop |B'|=n-l}}{\,}\sum_{\textstyle{A'\subseteq B'\atop |A'|=n-k}}q(A',B'){\,}(\phi^D)_{\ang{1}}(A')(\phi^D)_{\ang{2}}(B').
$$
Letting $A=C\setminus A'$ and $B=C\setminus B'$, the latter expression becomes
\begin{multline*}
\overline{P}{\,}{\!}^D_{k,l} ~=~ \sum_{\textstyle{A\subseteq C\atop |A|=k}}{\,}\sum_{\textstyle{B\subseteq A\atop |B|=l}}q(C\setminus A,C\setminus B)\\
\times {\,}(\phi^D)_{\ang{1}}(C\setminus A)(\phi^D)_{\ang{2}}(C\setminus B).
\end{multline*}
Using both Proposition~\ref{eq:du43} and our assumption on the joint relative quality function, we obtain
\begin{multline*}
\overline{P}{\,}{\!}^D_{k,l} ~=~ \sum_{\textstyle{A\subseteq C\atop |A|=k}}{\,}\sum_{\textstyle{B\subseteq A\atop |B|=l}}q(A,B)\\
\times {\,}\big(1-\phi_{\ang{2}}(A)-\phi_{\ang{1}}(B)+\phi_{\ang{2}}(A)\phi_{\ang{1}}(B)\big).
\end{multline*}

Now recall from Proposition~\ref{prop:5re86} that
$$
\sum_{\textstyle{A\subseteq C\atop |A|=k}}{\,}\sum_{\textstyle{B\subseteq A\atop |B|=l}}q(A,B) ~=~ 1 ~=~ \overline{P}_{0,0}.
$$
Also, denoting the tail structure signature of $\phi_{\ang{2}}$ by $\overline{\mathbf{P}}_{\ang{2}}$ and using \eqref{eq:f5sff}, we have
\begin{multline*}
\sum_{\textstyle{A\subseteq C\atop |A|=k}}\phi_{\ang{2}}(A){\,}\sum_{\textstyle{B\subseteq A\atop |B|=l}}q(A,B) ~=~ \overline{P}_{n-k,\ang{2}}\\
~=~ \Pr(T_{\calS}^{\geqslant 2}>T_{n-k:n}) ~=~ \overline{P}_{0,n-k}{\,}.
\end{multline*}
Similarly, we obtain
\begin{multline*}
\sum_{\textstyle{B\subseteq C\atop |B|=l}}\phi_{\ang{1}}(B){\,}\sum_{\textstyle{A\supseteq B\atop |A|=k}}q(A,B) ~=~ \overline{P}_{n-l,\ang{1}}\\
~=~ \Pr(T_{\calS}^{\geqslant 1}>T_{n-l:n}) ~=~ \overline{P}_{n-l,0}{\,}.
\end{multline*}
We then use Theorem~\ref{thm:main32} again and collect the terms to complete the proof.
\end{proof}

\begin{remark}
\begin{enumerate}
\item[(a)] We observe that, independently of any assumption, Equations~\eqref{eq:pkld} and \eqref{eq:Pkld} are equivalent (due to the conversion formulas \eqref{eq:CoFo1} and \eqref{eq:CoFo2}). This means that \eqref{eq:pkld} holds if and only if \eqref{eq:Pkld} holds.

\item[(b)] Condition \eqref{eq:passcompl} might be surprising. However, this condition is not only sufficient for Eqs.~\eqref{eq:pkld} and \eqref{eq:Pkld} to hold, but it is also necessary in the sense that if we ask \eqref{eq:pkld} and \eqref{eq:Pkld} to hold for every semicoherent system, then \eqref{eq:passcompl} must hold. To see that this claim holds, just apply \eqref{eq:Pkld} to the system $\phi=\phi_{\ang{1}}+\phi_{\ang{2}}$, where $\phi_{\ang{1}}(\bfx)=\bigvee_{i\in B}x_i$ and $\phi_{\ang{2}}(\bfx)=\bigvee_{i\in A}x_i$ for some $A,B\subseteq C$ such that $A\subseteq B$.
\item[(c)] It is not difficult to exhibit examples to show that, as soon as $n\geqslant 3$, condition \eqref{eq:passcompl} is weaker than requiring that $q=q_0$ (where $q_0$ is defined in \eqref{eq:qExch5}).
\end{enumerate}
\end{remark}

\section{Some concluding remarks about multistate systems}

In Sections 4 and 5 we have investigated the concepts of probability signature and reliability function for multistate systems made up of two-state components. To this extent we have stated a Decomposition Principle based on the Boolean decomposition of the structure functions and this decomposition has revealed the interesting fact that the investigation of a multistate system reduces to the simultaneous investigation of several two-state systems.

Although the use of the Boolean decomposition in the investigation of multistate systems dates back to the work by Block and Savits \cite{BloSav82} in 1982 and possibly earlier, the fact that a multistate system can be decomposed into several two-state systems on the same set of components has, up to our knowledge, never been exploited to investigate the signature of multistate systems. For instance, Gertsbakh et al.~\cite[Theorem 1]{GerSchSpi12} and Da and Hu~\cite[Theorem 7.2.3]{DaHu13} recently obtained a signature-based decomposition of the reliability of a $3$-state system in the i.i.d.\ case. However, their proofs strongly rely on both the regularity condition (see Remark~\ref{rem:reg5} above) and the i.i.d.\ character of the component lifetimes. In the present paper the Decomposition Principle enabled us to use the results of Sections 2 and 3 to derive Proposition~\ref{prop:mainDEC9} almost immediately and without any restrictive condition. This shows that the Decomposition Principle offers a more general algebraic framework and a very efficient tool to deal with such problems in the general non-i.i.d.\ case.

Note that the system components considered in this paper have two states only. Of course it is natural to also investigate multistate systems made up of multistate components (see, e.g., \cite{BarWu78,BloSav82,Ery14,Ery15,Hat79,Jan85,Lev13,LiWuLaiLiu05}). We believe that our algebraic approach can be extended to this general case to investigate the concept of signature. This constitutes a topic of ongoing research work.

\section*{Acknowledgments}

The authors thank the anonymous reviewers for pointing out Natvig's work \cite{Nat82,Nat11}. Jean-Luc Marichal acknowledges partial support by the research project R-AGR-0500 of the University of Luxembourg. Jorge Navarro acknowledges support by Ministerio de Econom\'{\i}a y Competitividad under grant MTM2012-34023-FEDER.



\begin{thebibliography}{99}

%

\bibitem{BarWu78}
R.~E.~Barlow and A.~Wu.
\newblock Coherent systems with multi-state components.
\newblock {\em Math. Oper. Res.} 3(4):275--281, 1978.

\bibitem{Bol01}
P.~J. Boland.
\newblock Signatures of indirect majority systems.
\newblock {\em J. Appl. Prob.} 38:597--603, 2001.

\bibitem{BloSav82}
H.~W.~Block and T.~H.~Savits.
\newblock A decomposition for multistate monotone systems.
\newblock {\em J. Appl. Prob.} 19:391--402, 1982.


\bibitem{DaHu13}
G.~Da and T.~Hu.
\newblock On bivariate signatures for systems with independent modules.
\newblock In: {\em Stochastic orders in reliability and risk}, (H.~Li and X.~Li, eds.)
\newblock Lecture Notes in Statistics. Springer, London. Chapter 7, pages 143--166, 2013.

\bibitem{DukMar08}
A.~Dukhovny and J.-L.~Marichal.
\newblock System reliability and weighted lattice polynomials.
\newblock {\em Probability in the Engineering and Informational Sciences} 22(3):373--388, 2008.

\bibitem{NewProSet78}
E.~El-Neweihi, F.~Proschan, and J.~Sethuraman.
\newblock Multistate coherent system.
\newblock {\em J. Appl. Prob.} 15:675--688, 1978.

\bibitem{Ery14}
S.~Eryilmaz.
\newblock Lifetime of multistate $k$-out-of-$n$ systems.
\newblock {\em Qual. Reliab. Engng. Int.} 30:1015--1022, 2014.

\bibitem{Ery15}
S.~Eryilmaz.
\newblock Mixture representations for three-state systems with three-state components.
\newblock {\em IEEE Trans.\ Reliability Theory} 64(2):829--834, 2015.

\bibitem{GerShpSpi11}
I.~Gertsbakh, Y.~Shpungin, and F.~Spizzichino.
\newblock Signatures of coherent systems built with separate modules.
\newblock {\em J. Appl. Probab.} 48(3):843--855, 2011.

\bibitem{GerShp11}
I.~Gertsbakh and Y.~Shpungin.
\newblock Multidimensional spectra of multistate systems with binary components.
\newblock In: {\em Recent advances in reliability: signatures, multi-state systems and statistical inference}, (A.~Lisniansky and I.~Frenkel, eds.)
\newblock Springer, London. Chapter 4, pages 49--61, 2011.

\bibitem{GerSchSpi12}
I.~Gertsbakh, Y.~Shpungin, and F.~Spizzichino.
\newblock Two-dimensional signatures.
\newblock {\em J. Appl. Prob.} 49:416--429, 2012.

\bibitem{Hat79}
Y.~Hatoyama.
\newblock Reliability analysis of 3-state systems.
\newblock {\em IEEE Trans.\ Reliability Theory} R-28(5): 386--393, 1979.

\bibitem{Jan85}
X.~Janan
\newblock On multistate system analysis.
\newblock {\em IEEE Trans.\ Reliability Theory} R-34(4):329--337, 1985.



\bibitem{Lev13}
G.~Levitin.
\newblock Multi-state vector-$k$-out-of-$n$ systems.
\newblock {\em IEEE Trans.\ Reliability Theory} 62(3):648--657, 2013.

\bibitem{LevGerShp11}
G.~Levitin, I.~Gertsbakh, and Y.~Shpungin.
\newblock Evaluating the damage associate with intentional network disintegration.
\newblock {\em Reliability Engineering \& System Safety} 96:433-–439, 2011.

\bibitem{LiWuLaiLiu05}
J.-A.~Li, Y.~Wu, K.~K.~Lai, and K.~Liu.
\newblock Reliability estimation and prediction of multi-state components and coherent systems.
\newblock {\em Reliability Engineering and System Safety} 88:93--98, 2005.


\bibitem{MarMat11}
J.-L.~Marichal and P.~Mathonet.
\newblock Extensions of system signatures to dependent lifetimes: Explicit expressions and interpretations.
\newblock {\em J. Multivariate Analysis} 102(5):931--936, 2011.

\bibitem{MarMatSpi15}
J.-L.~Marichal, P.~Mathonet, and F.~Spizzichino.
\newblock On modular decompositions of system signatures.
\newblock {\em J. Multivariate Analysis} 134:19--32, 2015.

\bibitem{MarMatWal11}
J.-L.~Marichal, P.~Mathonet, and T.~Waldhauser.
\newblock On signature-based expressions of system reliability.
\newblock {\em J. Multivariate Analysis} 102(10): 1410--1416, 2011.

\bibitem{Nat82}
B.~Natvig.
\newblock Two suggestions of how to define a multistate coherent system.
\newblock {\em Adv. Appl. Prob.}, 14:434--455, 1982.

\bibitem{Nat11}
B.~Natvig.
\newblock {\em Multistate systems reliability theory with applications}.
\newblock (Wiley, Chichester, UK), 2011.

\bibitem{NavRyc07}
J.~Navarro and T.~Rychlik.
\newblock Reliability and expectation bounds for coherent systems with exchangeable components.
\newblock {\em J.~Multivariate Anal.} 98(1):102--113, 2007.

\bibitem{NavSamBal10}
J.~Navarro, F.~J.~Samaniego, and N.~Balakrishnan.
\newblock The joint signature of coherent systems with shared components.
\newblock {\em J. Appl. Prob.} 47:235--253, 2010.

\bibitem{NavSamBal13}
J.~Navarro, F.~J.~Samaniego, N.~Balakrishnan.
\newblock Mixture representations for the joint distribution of lifetimes of two coherent systems with shared components.
\newblock {\em Advances in Applied Probability} 45(4):1011--1027, 2013.

\bibitem{NavSamBalBha08}
J.~Navarro, F.~J.~Samaniego, N.~Balakrishnan, and D.~Bhattacharya.
\newblock On the application and extension of system signatures in engineering reliability.
\newblock {\em Naval Research Logistics} 55:313--327, 2008.

\bibitem{NavSpiBal10}
J.~Navarro, F.~Spizzichino, and N.~Balakrishnan.
\newblock Applications of average and projected systems to the study of coherent systems.
\newblock {\em J.~Multivariate Anal.} 101(6):1471--1482, 2010.

%
\bibitem{Sam85}
F.~J.~Samaniego.
\newblock On closure of the IFR class under formation of coherent systems.
\newblock {\em IEEE Trans.\ Reliability Theory} 34:69--72, 1985.

\bibitem{Sam07}
F.~J.~Samaniego.
\newblock {\em {System signatures and their applications in engineering reliability}.}
\newblock {Int. Series in Operations Research \& Management Science, 110. New York: Springer}, 2007.

\bibitem{Spi08}
F.~Spizzichino.
\newblock {\em {The role of signature and symmetrization for systems with non-exchangeable components}.}
\newblock {Advances in mathematical modeling for reliability, 138--148. Amsterdam: IOS}, 2008.


\end{thebibliography}
\end{document}